\newtheorem{question}{Question}
\newtheorem{theorem}{Theorem}[section]
\newtheorem{lemma}[theorem]{Lemma}
\newtheorem{claim}[theorem]{Claim}
\newcommand{\qedclaim}{\hfill $\diamond$ \medskip}
\newenvironment{proofclaim}{\noindent\emph{Proof of Claim.}}{\qedclaim}
\DeclareMathOperator{\mex}{mex}
\newcommand{\xor}{\oplus}
\title{The Normal Play of the Domination Game}
\author[1]{J. M. Brito}
\author[2]{T. Marcilon}
\author[3]{N. Martins}
\author[1]{R. Sampaio}
\affil[1]{Dept Computa\c c\~ao, Universidade Federal do Cear\'a, Fortaleza, Brazil}
\affil[2]{CCT, Universidade Federal do Cariri, Juazeiro do Norte, Brazil}
\affil[3]{Univ. Integ. Intern. Lusofonia Afro-Brasileira, Redenção, Brazil\thanks{email: \texttt{jmbmaluno@alu.ufc.br}, 
\texttt{thiago.marcilon@ufca.edu.br}, 
\texttt{nicolasam@unilab.edu.br}, \texttt{rudini@dc.ufc.br}}}
\date{}
\begin{document}
\maketitle
\begin{abstract}
In 2010, Bre\v{s}ar, Klav\v{z}ar and Rall introduced the optimization variant of the graph domination game and the game domination number, which was proved PSPACE-hard by Brešar et al. in 2016. In 2024, Leo Versteegen obtained the celebrated proof of the Conjecture $\frac{3}{5}$ on this variant of the domination game, proposed by Kinnersley, West and Zamani in 2013.
In this paper, we investigate for the first time the normal play of the domination game, which we call \textsc{Normal Domination Game}, that is an impartial game where the last to play wins.
We first prove that this game is PSPACE-complete even in graphs with diameter two.
We also use the Sprague-Grundy theory to prove that Alice (the first player) wins in the path $P_n$ if and only if $n$ is not a multiple of $4$, and wins in the cycle $C_n$ if and only if $n=4k+3$ for some integer $k$.
Moreover, we obtain a polynomial time algorithm to decide the winner for any disjoint union of paths and cycles in the \textsc{Normal Domination Game} and its natural partizan variant.
Finally, we also prove that the \textsc{Misère Domination Game} (the last to play loses) is PSPACE-complete, as are the natural partizan variants of the normal game and the misère game.
\end{abstract}

\noindent {\bf Keywords:} 
Domination game, normal game, misère game, PSPACE-hardness, Sprague-Grundy theory, partizan variant, Combinatorial Game Theory.

% ------------------------------------------------------
% ------------------------------------------------------
% ------------------------------------------------------
% ------------------------------------------------------
% ------------------------------------------------------
%%%%%%%%%%%%%%%%%%%%%%
\section{Introduction}
\label{sec:intro}
%%%%%%%%%%%%%%%%%%%%%%

Two of the most studied structures in graph theory are the independent sets and the dominating sets.
Given a graph $G$ and a set $S$ of vertices, we say that $S$ is \emph{independent} if there is no edge between the vertices of $S$ and we say that $S$ is a \emph{dominating set} if every vertex of $G$ is in $S$ or has a neighbor in $S$.
The \emph{independence number} $\alpha(G)$ and the \emph{domination number} $\gamma(G)$ are classic graph parameters, among the most investigated, consisting of the sizes of a maximum independent set and a minimum dominating set of the graph, respectively. 
These parameters were exhaustively investigated in the literature and are known to be NP-hard even in restricted graph classes.
In this paper, we deal with the game variants of these problems. 

The game variant of the independent set problem was introduced in 1908 by Henry Dudeney \cite{dudeney-1917} as the \textsc{Kayles} game in which two players, Alice and Bob, alternately select vertices of a graph $G$ in such a way that the set of selected vertices is an independent set.
The \textsc{Kayles} game is in the \emph{normal play convention}, that is, the last to play wins.
In the \emph{misère play convention}, the last to play loses and the game is called \textsc{Misère Kayles}.
The \textsc{Kayles} game was solved for some graph classes for the first time in 1956 by Guy and Smith \cite{guy56} through the Sprague-Grundy theory.
In 1978, Schaefer \cite{schaefer-1978} proved that the \textsc{Kayles} game is PSPACE-hard.
In 2002, Bodlander and Kratsch \cite{bodlaender02} proved that the \textsc{Kayles} game is polynomial time solvable in several graph classes, such as cographs, cocomparability graphs and circular arc graphs.
In 2001, the optimization variant of the \textsc{Kayles} game was introduced \cite{slater01}, in which Alice wants to maximize the number of selected vertices and Bob wants to minimize this number. The game independence number $\alpha_g(G)$ is the maximum number of selected vertices in the game with best play for Alice and Bob. Curiously, this optimization variant of the \textsc{Kayles} game and the game independence number $\alpha_g(G)$ have been little studied in the literature.

The game variant of the dominating set problem was introduced much later, in 2010, by Bre\v{s}ar, Klav\v{z}ar and Rall \cite{bresar10}, called as the \textsc{Domination Game}. In this game, Alice and Bob alternately select \emph{playable} vertices of a graph $G$, where a vertex is \emph{playable} in the game if it dominates at least one vertex that was not dominated before. The game ends when the selected vertices form a dominating set.
To the best of our knowledge, this game was investigated only in the \emph{optimization variant}, in which Alice wants to minimize the number of selected vertices and Bob wants to maximize this number. The \emph{game domination number} $\gamma_g(G)$ is the minimum number of selected vertices in the game with best play for Alice and Bob.
This parameter was intensively investigated.
In 2016, Brešar et al. proved that $\gamma_g(G)$ is PSPACE-hard \cite{bresar16}.
In 2013, Kinnersley, West, and Zamani \cite{west13} proved the Theorem $2/3$ and proposed the Conjecture $3/5$ on the \textsc{Domination Game}. They state that $\gamma_g(G)$ is at most $\frac{2}{3}n$ and $\frac{3}{5}n$, respectively, if $G$ has no isolated vertices.
Eleven years later, in 2024, Versteegen \cite{versteegen24} obtained the celebrated proof of the Conjecture $3/5$, which is now called the Theorem $3/5$, in 33 pages.
%Curiously, to the best of our knowledge, the domination game was never studied in the normal play convention or the misère play convention.

\paragraph{Our Contributions.}
In this paper, we start the investigation of the normal play of the domination game (the last to play wins), which we call \textsc{Normal Domination Game}.
We first prove that the \textsc{Normal Domination Game} is PSPACE-complete even in graph with diameter two. Our reduction is from the \textsc{Avoid-POSDNF-2} game, proved PSPACE-hard by Schaefer \cite{schaefer-1978}.
We also use the Sprague-Grundy theory to prove that Alice (the first player) wins in the path $P_n$ if and only if $n\%4\ne 0$ ($n$ is not a multiple of $4$), and wins in the cycle $C_n$ if and only if $n\%4=3$, where $\%$ is the remainder operation (mod).
We also determine in polynomial time the winner for any disjoint union of paths and cycles in the \textsc{Normal Domination Game} and its natural partizan variant, using the Combinatorial Game Theory.
Finally, we also prove that the \textsc{Misère Domination Game} is PSPACE-complete, as are the natural partizan variants of the normal game and the misère game.
We conclude with some open questions.

%%%%%%%%%%%%%%%%%%%%%%%%%%%%%%%%%%%%%%%%%%%%%%%%%%%%%%%%%
%%%%%%%%%%%%%%%%%%%%%%%%%%%%%%%%%%%%%%%%%%%%%%%%%%%%%%%%%
\section{Normal Domination game is PSPACE-hard}\label{sec:pspace}

In this section, we prove the PSPACE-completeness of the \textsc{Normal Domination Game}.
We obtain a reduction from the \textsc{Avoid-POSDNF-2} game, which was proved PSPACE-hard by Schaefer \cite{schaefer-1978}.
In this game, it is given a logical formula $\varphi$ in Disjunctive Normal Form\footnote{Disjunction of conjunctions, such as $(x_1\land x_4)\lor(x_2\land x_4)\lor(x_3\land x_4)\lor(x_5\land x_6)$} (DNF) such that every conjunction has at most two variables and every literal is positive. All variables start false. Alice and Bob alternate turns assigning true to a variable valued false. The player that makes the formula true after their move loses. That is, the first player that assigns true to a variable that is in a conjunction alone or in a conjunction with another already chosen variable loses.

We first show some properties of the \textsc{Avoid-POSDNF-2} game.

\begin{lemma}\label{lem.posdnf}
\textsc{Avoid-POSDNF-2} game is PSPACE-complete even if we assume that every variable appears in at least one conjunction, no variable appears in all conjunctions and, for each variable, the number of conjunctions containing it is odd.
\end{lemma}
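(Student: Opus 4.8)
The plan is to start from an arbitrary instance $\varphi$ of \textsc{Avoid-POSDNF-2} and pad it with ``suicide'' variables and ``dead'' conjunctions so as to enforce the three structural properties while keeping the winner unchanged. Since the game is already PSPACE-hard (Schaefer), and since it trivially lies in PSPACE (it lasts at most one move per variable, so a depth-first evaluation of the game tree runs in polynomial space), a polynomial-time such padding yields PSPACE-completeness of the restricted version.

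First I would set up the right picture of the game. A variable occurring in a singleton conjunction $(x)$ can never be set to true without immediately losing, and it is never forced to be played as long as any other move exists, so such a \emph{suicide} variable is irrelevant to optimal play and to who wins; likewise any conjunction containing a suicide variable is \emph{dead}, i.e.\ never satisfied in optimal play, and its presence does not affect which moves are safe nor which positions are forced losses. Stripping suicide variables and dead conjunctions, \textsc{Avoid-POSDNF-2} on $\varphi$ is exactly \textsc{Kayles} on the graph $H$ whose vertices are the surviving variables and whose edges are the two-variable conjunctions. (A variable in no conjunction is a ``free'' move; two of these cancel and a single leftover one can be simulated by a fresh disjoint $K_2$, so we may assume $H$ has no isolated vertex --- alternatively one checks that Schaefer's instances already have no singleton conjunction and no free variable.) The same analysis run backwards is what we exploit: introducing a fresh suicide variable $q$ with its conjunction $(q)$, together with any conjunctions $(v\wedge q)$ with $v$ an old variable, does not change the winner, because $q$ is never played in optimal play, the new conjunctions stay dead, and the pruned game trees are isomorphic.

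Now I would build the padded formula $\Phi$. The key arithmetic fact is that for a graph on an even number of vertices the number $m$ of even-degree vertices is even, since the number of odd-degree vertices is always even. So: if $|V(H)|$ is odd, first replace $H$ by the disjoint union $H\sqcup C_5$; \textsc{Kayles} has Sprague--Grundy value $0$ on $C_5$ (every move leaves a $P_2$, of value $1$), hence this does not change the winner, and the vertex count becomes even. Then let $\Phi$ be the formula of this graph together with: a new suicide variable $q$, the conjunction $(q)$, and one conjunction $(v\wedge q)$ for every vertex $v$ of even degree. Each old vertex then lies in $\deg(v)$ or $\deg(v)+1$ conjunctions, hence an odd number; $q$ lies in $m+1$ conjunctions, which is odd since $m$ is even; every variable lies in at least one conjunction; and because $H$ has an edge, no variable lies in all conjunctions ($q$ misses every edge-conjunction, and each old variable misses $(q)$). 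By the previous paragraph the \textsc{Avoid-POSDNF-2} game on $\Phi$ has the same winner as \textsc{Kayles} on $H\sqcup C_5$ (or on $H$), i.e.\ as the original game on $\varphi$, so $\Phi$ is the desired instance.

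The main thing to get right is the reduction to \textsc{Kayles}: the claim that suicide variables and dead conjunctions are invisible to optimal play needs the (routine but not wholly trivial) fact that an immediately losing move is never used unless it is the only move, formalized as an isomorphism of the two game trees after pruning such moves and as the observation that deleting a suicide variable does not create new ``free'' variables beyond what the preprocessing already handled. The other delicate point is the parity bookkeeping: the identity $m\equiv|V(H)|\pmod 2$ is precisely what makes a single suicide variable $q$ suffice, and it holds only after the $C_5$ correction, so that correction is essential rather than cosmetic.
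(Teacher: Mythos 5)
Your argument is correct (modulo the degenerate cases noted below), but it takes a genuinely different route from the paper's. The paper stays entirely at the formula level and applies three local patches: each free variable $x_i$ gets a fresh partner via a new conjunction $(x_i\land x_i')$; a fresh variable with a singleton conjunction kills ``appears in all conjunctions''; and a fresh variable $x'$ is paired by a new two-variable conjunction with every variable of even count, after which $x'$'s own count is forced odd by appending two or three \emph{duplicate} copies of the singleton conjunction $(x')$. The correctness argument is exactly the observation you isolate --- a variable lying in a singleton conjunction is a suicide move, hence never played under optimal play, hence the added conjunctions never fire. What you do differently is to make the Node-Kayles structure of the safe-move game explicit, use a Sprague--Grundy-zero component ($C_5$) to force the vertex count even, and then invoke the handshake lemma so that the number of even-degree vertices is automatically even, which lets a \emph{single} singleton conjunction $(q)$ suffice. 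Your version buys a construction with no repeated conjunctions (the paper's parity fix implicitly treats the DNF as a multiset; deduplicating conjunctions would break ``add three conjunctions with $x'$ alone'') and a conceptual explanation of why the game is hard at all, at the cost of importing Grundy values and the Kayles reformulation; the paper's version is more elementary and purely syntactic. In a final write-up you should make explicit the trivial cases (the stripped graph $H$ empty or edgeless, which are decidable in polynomial time and can be mapped to fixed instances satisfying the hypotheses) and note that the stripping step does not cascade, since a dead two-variable conjunction is deleted outright rather than shrunk to a new singleton.
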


\begin{proof}
If a variable $x_i$ does not appear in a conjunction, add the conjunction $(x_i\land x_i')$, where $x_i'$ is a new variable, and then $x_i$ or $x_i'$ can be selected without interfering with the other conjunctions, but, if $x_i$ (resp. $x_i'$) is selected when $x_i'$ (resp. $x_i$) was already selected, the player loses.
%In the PSPACE-hardness proof in Corollary 3.3 of \cite{schaefer-1978}, all variables of the instance of the \textsc{Avoid-POSDNF-2} game appears in at least one conjunction. So we may assume this.
If some variable appears in all conjunctions, add a new variable $x$ and a new conjunction containing only $x$. With this, no variable appears in all conjunctions and no player wants to select $x$, since otherwise they lose.
If some variable appears in an even number of conjunctions, we can make the following. Add a new variable $x'$ and, for each variable that appears in an even number of conjunctions, add a conjunction with this variable and $x'$. Moreover, add three (resp. two) conjunctions with $x'$ alone if $x'$ appears in an even (resp. odd) number of conjunctions.
With this, each variable appears in an odd number of conjunctions. Moreover, no player wants to select $x'$ (since otherwise they lose) and then the new conjunctions do not change the outcome of the game.
\end{proof}

\begin{theorem}\label{thm-pspace-normal}
The \textsc{Normal Domination Game} is \emph{PSPACE}-complete even in graphs with diameter two.
\end{theorem}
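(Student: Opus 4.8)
Membership in PSPACE is routine. Once a vertex is selected, all of its neighbours are dominated, so it is never again playable; hence the game lasts at most $|V(G)|$ moves. Moreover, as long as the selected set is not dominating there is an undominated vertex $u$, which is itself always playable, so the game ends exactly when a dominating set has been selected, and the last player to move wins. The winner is therefore decided by an alternating depth-first search of a game tree of polynomial depth, which runs in polynomial space.

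For hardness, the plan is to reduce from \textsc{Avoid-POSDNF-2}, which by Lemma~\ref{lem.posdnf} may be assumed to satisfy: every variable occurs in at least one conjunction, no variable occurs in all conjunctions, and every variable occurs in an odd number of conjunctions. Call a set $S$ of variables \emph{safe} if assigning exactly the variables of $S$ to true keeps $\varphi$ false; \textsc{Avoid-POSDNF-2} is precisely the game in which the players alternately enlarge a safe set and the player who cannot enlarge it loses, so Alice wins iff she can force the number of enlargements to be odd. Given such a $\varphi$ with variables $x_1,\dots,x_n$ and conjunctions $C_1,\dots,C_m$, I will build in polynomial time a graph $G=G(\varphi)$ of diameter two whose \textsc{Normal Domination Game} simulates this game. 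The backbone of $G$ is a \emph{variable vertex} $v_i$ for each $x_i$ and a \emph{conjunction vertex} $c_j$ for each $C_j$, with $c_j$ adjacent to the variable vertices of $C_j$; selecting $v_i$ is meant to correspond to assigning $x_i$ to true. To this backbone I attach a bounded amount of auxiliary gadgetry with three tasks: (i) make $v_i$ playable exactly while $x_i$ is still safe to assign, so that the admissible backbone moves are precisely the moves of \textsc{Avoid-POSDNF-2}; (ii) ensure that once the selected variable vertices correspond to a maximal safe set, the remaining playable vertices are forced and even in number --- this is where the odd-multiplicity hypothesis of Lemma~\ref{lem.posdnf} is used, to make the parities of the forced ``cleanup'' moves come out right; and (iii) force the diameter of $G$ down to two. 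If all three hold, then in any play in which both players only make backbone moves corresponding to safe enlargements, the total number of moves equals the number of enlargements plus a fixed even constant, so the player who makes the last safe enlargement makes the last move of the game. Hence Alice wins the \textsc{Normal Domination Game} on $G$ iff she wins \textsc{Avoid-POSDNF-2} on $\varphi$.

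The correctness proof then has the two usual directions. For one direction, I would show that deviating from the simulation is never profitable: if a player selects a variable vertex whose assignment is not safe, or selects an auxiliary vertex prematurely, the opponent has a reply drawn from gadgets (i)--(iii) that restores both the intended position and the intended parity; consequently, reading off the variable moves of an optimal play of $G$ yields an optimal play of \textsc{Avoid-POSDNF-2}. For the other direction, a winning strategy for a player in \textsc{Avoid-POSDNF-2} is lifted move-for-move to $G$, again using the gadget replies to absorb the opponent's deviations. Together with the PSPACE membership above, this gives PSPACE-completeness, and since $G$ has diameter two the hardness already holds on graphs of diameter two.

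I expect the principal obstacle to be the joint design of the gadgetry (i)--(iii), whose requirements conflict. The cheap route to diameter two --- a (near-)universal vertex --- is disastrous here, because selecting such a vertex would dominate almost all of $G$ in one move and hand the first player a trivial win; the diameter must instead be reduced by a more balanced hub/clique gadget that is never advantageous to enter, while still letting the selectability of each $v_i$ track the global safety of $x_i$ and keeping every spurious move punishable with the correct parity. Making all of this fit into one bounded gadget, and in particular checking the parity bookkeeping of step (ii) against the odd-multiplicity hypothesis of Lemma~\ref{lem.posdnf}, is the technical heart of the argument; the remaining verification is routine case analysis.
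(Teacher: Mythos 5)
Your PSPACE-membership argument is fine, and you have correctly identified the source problem (\textsc{Avoid-POSDNF-2} via Lemma~\ref{lem.posdnf}), the intended correspondence (selecting a variable gadget $=$ setting a variable true), and the main danger (a universal vertex trivializes the game). But what you have written is a specification of what a reduction ought to do, not a reduction: the entire content of the hardness proof lives in the gadgets (i)--(iii) that you leave undesigned and yourself flag as the ``technical heart.'' Without an explicit graph $G(\varphi)$ and a verification of the punishment/parity claims, there is no proof. Moreover, two of your design goals point in a direction that is both harder than necessary and different from what actually works. First, goal (i) --- making a variable vertex \emph{playable} exactly while the variable is safe --- is essentially unachievable in a domination game (playability only depends on whether the vertex still dominates something new) and is not needed: the working construction lets unsafe moves remain playable and instead makes them \emph{losing}. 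Second, your use of the odd-multiplicity hypothesis for the parity of a ``cleanup'' phase is not how that hypothesis earns its keep; there is no cleanup phase at all in the correct construction, because the game ends one move after the formula is made true.

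Concretely, the paper's construction resolves your conflicting requirements as follows, and you would need something equivalent. Create one vertex per \emph{occurrence} of a variable in a conjunction (not one per variable); the occurrences of $x_i$ form a clique $V_i$ of odd size (this is where odd multiplicity is used: if your opponent wastes a move on an already-redundant vertex of $V_i$, the odd size guarantees you a mirroring reply inside $V_i$ that keeps the simulation and the parity intact). Each conjunction $C_j$ gets a pair $c_j,\overline{c_j}$, with $c_j$ joined to the occurrences in $C_j$ and $\overline{c_j}$ joined to all occurrences \emph{outside} $C_j$; all these pair-vertices form a clique $C$. The clique $C$ gives diameter two without a universal vertex, and the complementary neighborhoods give the punishment mechanism for free: while some vertex of every $Q^j$ is still undominated, playing in $C$ is answered by the partner vertex, which finishes the game; and the move that first fully dominates some $Q^j$ (i.e., makes the formula true) cannot itself end the game but is answered by $\overline{c_j}$, which does. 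Until you exhibit gadgets meeting your goals (i)--(iii) and prove the analogues of these punishment claims, the argument has a genuine gap.
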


\begin{proof}
Clearly, the problem is in PSPACE.
Let $\varphi$ be an instance of the \textsc{Avoid-POSDNF-2} game with variables $x_1,\ldots,x_n$ and conjunctions $C_1,\ldots,C_m$ satisfying Lemma \ref{lem.posdnf}.
We may assume that $m\geq 3$, since otherwise the \textsc{Avoid-POSDNF-2} game is very easy.
Let $G$ be the instance of the \textsc{Normal Domination Game} constructed in the following manner:

\begin{itemize}
\item For each conjunction $C_j$, create a vertex to represent each of its variables and let $Q^j$ be the set of these vertices. Let $Q = Q^1 \cup Q^2 \cup \ldots \cup Q^m$;
\item Let $V_i$ be the set of every vertex representing the variable $x_i$ in any set $Q^j$, and let each set $V_i$ be a clique. Notice that each $V_i$ has odd size since each $x_i$ appears in an odd number of conjunctions of $\varphi$;
\item For $1\leq j \leq m$, create the vertices $c_j$ and $\overline{c_j}$, add an edge between $c_j$ and every vertex of $Q^j$, and add an edge between $\overline{c_j}$ and every vertex of $Q \setminus Q^j$;
\item Let $C = \{c_1,\overline{c_1},c_2,\overline{c_2},\ldots,c_m,\overline{c_m}\}$ be a clique and let each $Q^j$ be an independent set.
\end{itemize}

Figure \ref{fig:reductnormdomgame} shows the constructed graph $G$ of the reduction for the formula $\varphi = (x_1 \land x_4) \lor (x_2\land x_4) \lor (x_3\land x_4) \lor (x_5 \land x_6)$. This reduction can be carried out in linear time in the size of $\varphi$.
Notice that $G$ has diameter two, since $m\geq 3$.

\begin{figure}
\centering
\begin{tikzpicture}[font=\footnotesize]
\tikzstyle{vertex} = [draw,circle,minimum size=16pt,inner sep=1pt]
\tikzstyle{vertexclause} = [draw,circle,fill=black,minimum size=8pt,inner sep=0.1pt]
\tikzstyle{clause} = [draw,blue,rectangle,rounded corners=5pt]
\node[vertexclause,label={[label distance=0.1mm]270:$x_1$}] (v11) at (-5.25,0){};
\node[vertexclause,label={[label distance=0.1mm]270:$x_4$}] (v14) at (-5.25,-1.2){};
\node[vertexclause,label={[label distance=0.1mm]270:$x_2$}] (v22) at (-1,0){};
\node[vertexclause,label={[label distance=0.1mm]270:$x_4$}] (v24) at (-1,-1.2){};
\node[vertexclause,label={[label distance=0.1mm]270:$x_4$}] (v34) at (3.25,-1.2){};
\node[vertexclause,label={[label distance=0.1mm]270:$x_3$}] (v33) at (3.25,0){};
\node[vertexclause,label={[label distance=0.1mm]270:$x_5$}] (v45) at (7.5,0){};
\node[vertexclause,label={[label distance=0.1mm]270:$x_6$}] (v46) at (7.5,-1.2){};

\node[vertex] (c1)  at (-5.25,5){$c_1$};
\node[vertex] (oc1) at (-4.05,5){$\overline{c_1}$};
\node[vertex] (c2)  at (-1.00,5){$c_2$};
\node[vertex] (oc2) at ( 0.20,5){$\overline{c_2}$};
\node[vertex] (c3)  at ( 3.25,5){$c_3$};
\node[vertex] (oc3) at ( 4.45,5){$\overline{c_3}$};
\node[vertex] (c4)  at ( 7.50,5){$c_4$};
\node[vertex] (oc4) at ( 8.70,5){$\overline{c_4}$};

\node[clause,minimum width=15.5cm,minimum height = 1cm,label=180:\Large{$C$}] (C) at (1.7,5){};

\node[clause,minimum width=1.2cm,minimum height = 2.5cm,label=270:\Large{$Q^1$}] (Q1) at (-5.25,-.7){};

\node[clause,minimum width=1.2cm,minimum height = 2.5cm,label=270:\Large{$Q^2$}] (Q2) at (-1,-.7){};

\node[clause,minimum width=1.2cm,minimum height = 2.5cm,label=270:\Large{$Q^3$}] (Q3) at (3.25,-.7){};

\node[clause,minimum width=1.2cm,minimum height = 2.5cm,label=270:\Large{$Q^4$}] (Q4) at (7.5,-.7){};

\draw[-] (v14) to (v24) to (v34) to[bend right=10] (v14);
\draw[-] (c1)  to (Q1);
\draw[-] (oc1) to (Q2);
\draw[-] (oc1) to (Q3);
\draw[-] (oc1) to (Q4);
\draw[-] (c2)  to (Q2);
\draw[-] (oc2) to (Q1);
\draw[-] (oc2) to (Q3);
\draw[-] (oc2) to (Q4);
\draw[-] (c3)  to (Q3);
\draw[-] (oc3) to (Q1);
\draw[-] (oc3) to (Q2);
\draw[-] (oc3) to (Q4);
\draw[-] (c4)  to (Q4);
\draw[-] (oc4)  to (Q1);
\draw[-] (oc4)  to (Q2);
\draw[-] (oc4)  to (Q3);
\end{tikzpicture}
\caption{\label{fig:reductnormdomgame} Graph $G$ of the reduction for $\varphi = (x_1 \land x_4) \lor (x_2\land x_4) \lor (x_3 \land x_4) \lor (x_5 \land x_6)$. $C$ is a clique, but $Q^1, Q^2$, $Q^3$ and $Q^4$ are independent sets. An edge to a set $Q^j$ means an edge to every vertex in $Q^j$.}
\end{figure}

During the \textsc{Normal Domination Game} on the constructed graph $G$, we say that the game is \emph{open} if there is a non-dominated vertex in every set $Q^j$ ($1 \leq j \leq m$). Otherwise, we say that the game is \emph{closed}. We say that a \emph{move closes the game} if it was open before the move and closed after the move.

\begin{claim}
\label{claim:Qnotdom}
If a move closes the game, there is a non-dominated vertex in $Q$ after this move.
\end{claim}

\begin{proofclaim}
Consider a move that closes the game. Recall that, before this move, there was a non-dominated vertex in every $Q^j$.
If the move is in $C$, there is a non-dominated vertex in $Q$ after this move, since no vertex in $C$ is adjacent to a vertex in every $Q^j$.
If the move is in $Q$, there is also a non-dominated vertex in $Q$ after this move, since no variable is in every conjunction of $\varphi$.
%Suppose, by contradiction, that every vertex in $Q$ is dominated when the game closes at some turn. Since it was open just before that turn, there was some vertex in each $Q^j$ not dominated by the chosen vertices just before that turn. Since no vertex in $C$ is adjacent to a vertex in every $Q^j$, at that turn, a vertex in $Q$ must have been chosen. In this case, however, in order to the chosen vertex be adjacent to a vertex in every $Q^j$, the variable it represents must be present in every conjunction of $\varphi$, which also cannot happen.
\end{proofclaim}

\begin{claim}
\label{claim:cantplayinC}
If a player chooses a vertex of $C$ when the game is open, they lose.
\end{claim}

\begin{proofclaim}
Suppose that the game is open and the player selects the vertex $c_j$ (resp. $\overline{c_j}$). The player cannot win with this move, since no vertex of $C$ is adjacent to a vertex in every $Q^\ell$. So, the opponent wins by selecting $\overline{c_j}$ (resp. $c_j$).
\end{proofclaim}

\begin{claim}
\label{claim:wincond}
If a player closes the game, they lose.
\end{claim}

\begin{proofclaim}
Consider a move that closes the game.
By Claim \ref{claim:Qnotdom}, there is at least one non-dominated vertex in $Q$ after this move, and then the player does not win with this move. However, after that, there is some set $Q^j$ whose vertices are all dominated. So, in the next turn, the opponent wins by selecting $\overline{c_j}$.
\end{proofclaim}

\begin{claim}
\label{claim:skip}
If the game is open and the player selects a vertex in $V_i$, where $V_i$ had an even number of unchosen vertices before this move, then the opponent can play on $V_i$ keeping the game open and the number of unchosen vertices in $V_i$ even.
\end{claim}

\begin{proofclaim}
Since the number of vertices in $V_i$ is odd and the number of unchosen vertices in $V_i$ is even, there is an odd number of chosen vertices in $V_i$. This means that there is at least one vertex in $V_i$ already chosen at this point of the game, which implies that all vertices in $V_i$ are already dominated by the chosen vertices.

Let $P_1$ be the player that selects a vertex in $V_i$ in the next turn and $P_2$ be the opponent of $P_1$. Since the game is open before $P_1$'s move, then no set $Q^j$ of size 1 contains a vertex of $V_i$ and every set $Q^j$ of size 2 containing a vertex of $V_i$ has a non-dominated vertex.
Thus, after $P_1$'s move on $V_i$, the game remains open because it does not dominate any new vertex in $Q$. Therefore, $P_1$ cannot win with this move.

After $P_1$'s move, there is an odd number of unchosen vertices in $V_i$. Let $v\in V_i$ be one such vertex, which is in some $Q^j$, and let $w$ be the other vertex in $Q^j$. Since the game is open and $v$ is dominated, $w$ is not dominated at this point. No vertex of $C$ was chosen previously, since, otherwise, $P_1$ could not have made its move on $V_i$. Thus, $c_j$ is not dominated. Therefore, the opponent can play in $v$, since it dominates a new vertex ($c_j$). Also, the game is still open, since $v$ does not dominate any new vertex in $Q$. Finally, since before $P_1$'s turn, there was an even number of unchosen vertices in $V_i$ and both players selected vertices in $V_i$, then, after $P_2$'s turn, there is still an even number of unchosen vertices in $V_i$.
\end{proofclaim}

\medskip

Let $P_1$ be either Alice or Bob and let $P_2$ be the other player. Suppose that $P_1$ has a winning strategy in the \textsc{Avoid-POSDNF-2} game. We show a winning strategy for $P_1$ in the \textsc{Normal Domination Game}. The strategy consists in choosing the corresponding vertices in $Q$ until the game closes. While the game is still open, no player wants to move in $C$; otherwise, according to Claim \ref{claim:cantplayinC}, the player loses. If $P_2$ chooses a vertex in some $V_i$ such that there is already a vertex in $V_i$ previously chosen, then, by Claim \ref{claim:skip}, $P_1$ can choose another vertex in $V_i$ so that the game remains open and the correspondence to the winning strategy in the \textsc{Avoid-POSDNF-2} game is maintained. Since $P_1$ has a winning strategy in the \textsc{Avoid-POSDNF-2} game, $P_2$ is the player that closes the game. Therefore, by Claim \ref{claim:wincond}, $P_1$ wins the \textsc{Normal Domination Game} on $G$.
\end{proof}

%%%%%%%%%%%%%%%%%%%%%%%%%%%%%%%%%%%%%%%%%%%%%%%%%%%%%%%%%
%%%%%%%%%%%%%%%%%%%%%%%%%%%%%%%%%%%%%%%%%%%%%%%%%%%%%%%%%
%%%%%%%%%%%%%%%%%%%%%%%%%%%%%%%%%%%%%%%%%%%%%%%%%%%%%%%%%
%%%%%%%%%%%%%%%%%%%%%%%%%%%%%%%%%%%%%%%%%%%%%%%%%%%%%%%%%
\section{Misère Domination Game is PSPACE-hard}\label{sec:path-cycle}

In this section, we prove the PSPACE-completeness of the \textsc{Misère Domination Game}, the variant of the domination game in which the last to play loses.

\begin{theorem}\label{thm-pspace-misere}
The \textsc{Misère Domination Game} is \emph{PSPACE}-complete even in graphs with diameter four.
\end{theorem}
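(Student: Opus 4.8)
The plan is to adapt the reduction from Theorem~\ref{thm-pspace-normal}, again starting from the \textsc{Avoid-POSDNF-2} game (as strengthened in Lemma~\ref{lem.posdnf}), but to modify the gadget so that the parity of the whole game is flipped relative to the normal-play construction. In the normal-play reduction, the loser of the \textsc{Avoid-POSDNF-2} game is the player forced to ``close the game,'' and then the opponent wins by playing the appropriate $\overline{c_j}$; in the mis\`ere convention the last move \emph{loses}, so I would attach a small parity-adjusting gadget (for instance a pendant structure that must be played exactly once, such as an extra isolated edge or a pair of vertices $y,\overline{y}$ joined so that exactly one extra forced move is inserted into the play) to the graph $G$ of the previous construction. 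The point is to arrange that the player who would win the normal game on $G$ is exactly the player who makes the \emph{second-to-last} move in the mis\`ere game on the augmented graph $G'$, hence wins under mis\`ere rules as well. Because this augmentation adds only vertices at bounded distance from $C$, the diameter grows only to four, matching the statement.

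First I would describe $G'$ precisely: take $G$ from Theorem~\ref{thm-pspace-normal} and add a short ``finishing gadget'' $F$ attached to the clique $C$ (say, a path or a small tree hanging off one $c_j$) designed so that (i) no vertex of $F$ is ever \emph{playable} until the game on the $G$-part is essentially decided, and (ii) once the $G$-part reaches the state where a player was about to win the normal game, there remains exactly one additional playable move in $F$, which the winner of that subgame is forced to make, thereby becoming the last player to move. Second, I would re-prove the analogues of Claims~\ref{claim:Qnotdom}--\ref{claim:skip}: the arguments that no player wants to move in $C$ while the game is open, that closing the game is losing in the $G$-part, and the parity-preserving pairing strategy on each clique $V_i$, all carry over verbatim since $F$ does not interact with $Q$ or with the ``open/closed'' structure. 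Third, I would combine these: if $P_1$ wins \textsc{Avoid-POSDNF-2}, then in $G'$ the player $P_1$ can force $P_2$ to close the game in the $G$-part, at which point $P_1$ would make the move that wins the normal game; but now that move is not the last, because exactly one move in $F$ remains, and whoever is forced into it is the last to play and \emph{loses} the mis\`ere game. One then checks the bookkeeping so that it is $P_2$, not $P_1$, who is stuck with the final $F$-move; if the naive gadget gives the wrong parity, adjust $F$ to insert two forced moves instead of one.

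The main obstacle I anticipate is designing the finishing gadget $F$ so that it is genuinely ``inert'' during the main play: it must not create prematurely playable vertices (which would let a player dodge the \textsc{Avoid-POSDNF-2} correspondence by wasting a move in $F$), it must not shorten the diameter argument or blow it past four, and the number of forced moves it contributes must have a fixed, known parity independent of how the \textsc{Avoid-POSDNF-2} subgame played out. The cleanest way to guarantee inertness is to make every vertex of $F$ dominated from the start by vertices already dominated, except for one ``trigger'' vertex that becomes non-dominated (hence makes an $F$-move playable) only after the specific winning move $\overline{c_j}$ is played; equivalently, route $F$ off the vertex $\overline{c_j}$ whose play ends the normal game. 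Verifying that this trigger fires at exactly the right moment, and that no earlier $C$-move (which, by the analogue of Claim~\ref{claim:cantplayinC}, is already losing) can fire it, is the delicate part; once that is nailed down, the rest is the same parity accounting as in Theorem~\ref{thm-pspace-normal}.
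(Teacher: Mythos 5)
Your proposal is a plan rather than a proof: the entire argument hinges on a ``finishing gadget'' $F$ that you never actually construct, and you yourself flag its design as the delicate part. Unfortunately the difficulty is not a routine detail but the crux, and the specific realizations you float do not work. First, in the domination game an instance is just a graph; there is no notion of vertices that are ``dominated from the start,'' so your cleanest route to inertness is not available. Every vertex you add to $G$ begins undominated and is therefore immediately playable (it dominates itself), which means \emph{any} pendant structure hands both players free tempo moves at every stage of the game. A losing player can spend such a move to flip the parity and escape the correspondence with \textsc{Avoid-POSDNF-2}, so an ``inert'' one-move gadget in your sense simply does not exist; what is needed is a \emph{pairing} mechanism that punishes whoever wastes a move first. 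Second, your trigger idea (``route $F$ off the vertex $\overline{c_j}$ whose play ends the normal game'') is ill-posed: which index $j$ ends up with $Q^j$ fully dominated is determined by the play, not by the instance, so the gadget cannot be attached to a single predetermined $\overline{c_j}$; attaching copies to all of them multiplies the number of extra forced moves by a quantity whose parity again depends on the play.

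The paper avoids all of this by not returning to \textsc{Avoid-POSDNF-2} at all: it reduces from the \textsc{Normal Domination Game} itself (already proved PSPACE-hard in Theorem~\ref{thm-pspace-normal}). Given an instance $H$, it adds two symmetric claws $x_1x_2,x_1x_3$ and $y_1y_2,y_1y_3$ with $x_1$ and $y_1$ adjacent to every vertex of $H$. The normal-play winner $X$ plays the winning strategy inside $H$; since $X$ makes the last move in $H$, the opponent $Y$ is the first to play outside $H$, and whichever of the six new vertices $Y$ picks, $X$ has a mirror reply in the \emph{other} claw that leaves a position with exactly one remaining move, forcing $Y$ to play last and lose. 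The symmetry of the two claws is exactly the pairing device your sketch is missing, and the blackbox reduction sidesteps having to re-verify Claims~\ref{claim:Qnotdom}--\ref{claim:skip} in a modified graph. If you want to salvage your approach, you would need to replace $F$ by such a paired, mirrorable gadget and prove an analogue of Claim~\ref{claim:cantplayinC} showing that entering the gadget while the game is open is losing; at that point you have essentially rebuilt the paper's construction inside the \textsc{Avoid-POSDNF-2} reduction, with strictly more case analysis.
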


\begin{proof}
Clearly, the problem is in PSPACE.
We obtain a reduction from the normal variant.
Let $H$ be an instance of the \textsc{Normal Domination Game}. Let $G$ be the graph obtained from $H$ by adding 6 new vertices $x_1,x_2,x_3,y_1,y_2,y_3$ and the edges $x_1x_2$, $x_1x_3$, $y_1y_2$, $y_1y_3$, $x_1v$ and $y_1v$ for every vertex $v$ of $H$. See Figure \ref{fig-dom-misere}.

\begin{figure}[ht]\centering
\scalebox{1.0}{
\begin{tikzpicture}[scale=1]
\tikzstyle{vertex}=[draw,circle,fill=black!10,minimum size=15pt,inner sep=1pt]

\node[vertex] (x1) at (1.0,1.0) {$x_1$};
\node[vertex] (y1) at (5.0,1.0) {$y_1$};
\node[vertex] (x2) at (0.0,1.7) {$x_2$};
\node[vertex] (y2) at (6.0,1.7) {$y_2$};
\node[vertex] (x3) at (0.0,0.3) {$x_3$};
\node[vertex] (y3) at (6.0,0.3) {$y_3$};

\draw (3,1) ellipse (0.5cm and 1cm);
\node at (3,1) {\large $H$};
\node at (1.7,2.3) {\LARGE $G$};
\draw (x2)--(x1)--(x3);
\draw (y2)--(y1)--(y3);
\path[-]
(x1)edge(2.4,1)edge(2.5,0.2)edge(2.5,1.8)edge(2.4,1.4)edge(2.4,0.6);
\path[-]
(y1)edge(3.6,1)edge(3.5,0.2)edge(3.5,1.8)edge(3.6,1.4)edge(3.6,0.6);
\end{tikzpicture}}
\caption{\label{fig-dom-misere}Proof of the PSPACE-hardness of the \textsc{Misère Domination Game}. Graph $G$ of the reduction from the \textsc{Normal Domination Game} on $H$.}
\end{figure} 

Suppose that some player $X\in\{Alice, Bob\}$ has a winning strategy in the \textsc{Normal Domination Game} on $H$ and let $Y$ be the opponent. We present a winning strategy for $X$ in the \textsc{Misère Domination Game} on $G$. The player $X$ always prefer to play the winning strategy of the \textsc{Normal Domination Game} on the same vertices of $H$ (which are also vertices of $G$), unless $Y$ plays outside $H$. Since $X$ wins the normal variant in $H$, then $Y$ is the first to play outside $H$.
If $y_1$ is the first vertex selected outside $H$, then $X$ selects $x_2$ and the only playable vertices are $x_1$ and $x_3$; thus $Y$ plays last and loses.
If $y_2$ is the first vertex selected outside $H$, then $X$ selects $x_1$ and the only playable vertices are $y_1$ and $y_3$; thus $Y$ plays last and loses. Analogously if the first vertex selected outside $H$ is $y_3$, $x_1$, $x_2$ or $x_3$.
\end{proof}

%%%%%%%%%%%%%%%%%%%%%%%%%%%%%%%%%%%%%%%%%%%%%%%%%%%%%%%%%
%%%%%%%%%%%%%%%%%%%%%%%%%%%%%%%%%%%%%%%%%%%%%%%%%%%%%%%%%
%%%%%%%%%%%%%%%%%%%%%%%%%%%%%%%%%%%%%%%%%%%%%%%%%%%%%%%%%
%%%%%%%%%%%%%%%%%%%%%%%%%%%%%%%%%%%%%%%%%%%%%%%%%%%%%%%%%
\section{Impartial game in paths and cycles}\label{sec:path-cycle}

In this section, we use the Sprague-Grundy theory to solve the \textsc{Normal Domination Game} in paths and cycles.
In order to do this, we need to explain some technical details of the Sprague-Grundy theory, which is focused on the normal play of impartial games such as the \textsc{Normal Domination Game}.

\subsection{Nimbers and the Sprague-Grundy Theory}\label{sec:nimbers}

We say that a two-person combinatorial game is \emph{impartial} if the set of moves available from any given position (or configuration) is the same for both players.
It is easy to see that the \textsc{Normal Domination Game} is impartial.

\textsc{Nim} is one of the most important impartial games in which two players take turns removing objects from disjoint heaps until there are no objects remaining.
On each turn, a player chooses a non empty heap and must remove a positive number of objects from this heap.
%The game ends when all objects are removed.
An instance of \textsc{Nim} is given by a sequence $\Phi=(h_1,h_2,\ldots,h_k)$, where $k$ is the number of heaps and $h_i\geq 1$ is the $i$-th heap's size.
\textsc{Nim} plays a fundamental role in the Sprague-Grundy game theory and was mathematically solved \cite{nim1901} by Bouton in 1901 for any instance $\Phi$. In other words, the decision problem of deciding which player has a winning strategy is polynomial time solvable in both \textsc{Nim} variants.
This is related to \textbf{nim-sum}$(\Phi)=h_1\xor\ldots\xor h_k$, where $\xor$ is the bitwise xor operation.
Alice has a winning strategy in the normal play of \textsc{Nim} if and only if \textbf{nim-sum}$(\Phi)>0$. Moreover, Alice has a winning strategy in the misère play of \textsc{Nim} if and only if \textbf{nim-sum}$(\Phi)>0$ and there is at least one heap with more than one object, or \textbf{nim-sum}$(\Phi)=0$ and all non-empty heaps have exactly one object.
The winning strategies in both variants is to finish every move with a nim-sum of 0, except in the misère variant when all non-empty heaps have exactly one object.

The Sprague-Grundy theory, independently developed by R. P. Sprague \cite{sprague36} and P. M. Grundy \cite{grundy39} in the 1930s, states that it is possible to associate a number (called \emph{nimber}) to every finite position of an impartial game, associating it to a one-heap game of \textsc{Nim} with that size. As expected from this association with heaps, after one move on a position with nimber $h>0$ of an impartial game, it is possible to obtain a position with any nimber in $\{0,1,\ldots,h-1\}$. %The first player has a winning strategy on an impartial game position under the normal play convention if and only if its nimber is greater than zero.
One of the key elements of the Sprague-Grundy theory is that the nimber of a position can be calculated by the value $\mex\{h_1,\ldots,h_k\}$, where $\{h_1,\ldots,h_k\}$ contains all nimbers of the positions obtained after one move and the \emph{minimum excludant} $\mex$ is the minimum non-negative integer not included in the set.
Moreover, a position obtained by the disjoint union of $k$ positions with nimbers $h_1,\ldots,h_k$ has nimber $h_1\xor\ldots\xor h_k$.
Note that the operator $\xor$ is commutative and associative, and that $h_i\xor h_j=0$ if and only if $h_i=h_j$.

%%%%%%%%%%%%%%%%%%%%%%%%%%%%%%%%%%%%%%%%%%%%%%%%%%%%%%%%%
%%%%%%%%%%%%%%%%%%%%%%%%%%%%%%%%%%%%%%%%%%%%%%%%%%%%%%%%%
%%%%%%%%%%%%%%%%%%%%%%%%%%%%%%%%%%%%%%%%%%%%%%%%%%%%%%%%%
%%%%%%%%%%%%%%%%%%%%%%%%%%%%%%%%%%%%%%%%%%%%%%%%%%%%%%%%%
\subsection{Paths}\label{sec:paths}

In this section, we prove the following theorem regarding the  \textsc{Normal Domination Game} for paths $P_n$, where $P_n$ is the graph with vertices $n$ vertices $v_1,\ldots,v_n$ and $n-1$ edges $v_1v_2, v_2v_3,\ldots,v_{n-1}v_n$.
Let $n\ \%\ k$ be the remainder of the division of $n$ by $k$, also denoted by $n\mod k$.

\begin{theorem}\label{teo-nimb}
Consider the \textsc{Normal Domination Game} and let $n\geq 0$.
The nimber of $P_1$, $P_2$ and $P_3$ are 1, 1 e 2, respectively. Moreover, if $n\geq 4$, then the nimber of $P_n$ is 0, 1, 1 or 3 depending whether $n\% 4$ is 0, 1, 2 or 3, respectively.
Consequently, Bob wins on the path $P_n$ if and only if $n$ is multiple of 4.
\end{theorem}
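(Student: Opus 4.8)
The plan is to compute the nimbers of all the positions that can arise during the \textsc{Normal Domination Game} on a path, and then read off the answer from the formula for $P_n$. The key observation is that once some vertices have been played, the remaining ``live'' part of a path decomposes into independent subgames, and each connected piece is characterized by two parameters: its length (number of vertices) and whether its two endpoints are already dominated or not. So first I would introduce notation for these game positions: let me write, say, $P_n^{ab}$ for a path on $n$ vertices in which the left endpoint is dominated iff $a=1$ and the right endpoint is dominated iff $b=1$, with $P_n = P_n^{00}$. A vertex is playable precisely when it has an undominated neighbor or is itself undominated, so in $P_n^{ab}$ the playable vertices are exactly those that dominate something new; playing vertex $v_i$ splits the path into $P_{i-1}^{a,1}$ and $P_{n-i}^{1,b}$ (with the convention that a path of length $0$ is the empty game with nimber $0$, and one must be careful about when the two neighbours of $v_i$ were already dominated, in which case $v_i$ was not playable at all). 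By the Sprague--Grundy theorem the nimber of $P_n^{ab}$ is the $\mex$ over all playable $v_i$ of the xor of the nimbers of the two resulting pieces.

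Next I would set up the recursion and compute a table of the values $g(n,a,b) := $ nimber of $P_n^{ab}$ for small $n$, exploiting the obvious symmetry $g(n,a,b)=g(n,b,a)$. For instance $g(0,\cdot,\cdot)=0$; $g(1,\cdot,\cdot)$ is $0$ if the single vertex is already dominated (no playable move) and something nonzero otherwise; and so on. The goal is to guess, from the data, a closed form for $g(n,a,b)$ that is eventually periodic in $n$ with period $4$ for each fixed $(a,b)$, together with a short list of exceptional small values. I expect the pattern to stabilize very quickly --- by $n$ around $4$ or $5$ --- after which the three functions $g(n,00),g(n,01),g(n,11)$ each follow a period-$4$ cycle. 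Once the candidate formulas are in hand, I would prove them simultaneously by strong induction on $n$: for each residue class of $n$ mod $4$ and each $(a,b)$, I show that the set of nimbers reachable in one move equals exactly $\{0,1,\dots,g(n,a,b)-1\}$ (i.e.\ every smaller nimber is hit and $g(n,a,b)$ itself is not), using the inductive values of $g$ on the strictly shorter pieces $P_{i-1}$ and $P_{n-i}$ and the algebra of $\xor$. The statement about $P_n = P_n^{00}$, and hence the ``$1,1,2$'' base cases and the ``$0,1,1,3$'' values for $n\ge 4$, is then just the $(a,b)=(0,0)$ row of the table.

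The last step is purely a matter of translation: a position in an impartial normal-play game is a first-player (Alice) win iff its nimber is nonzero, and a second-player (Bob) win iff its nimber is $0$. Since the nimber of $P_n$ is $0$ exactly when $n$ is a multiple of $4$ (it equals $1$ when $n\in\{1,2\}$ or $n\%4\in\{1,2\}$, and $2$ or $3$ when $n=3$ or $n\%4=3$, all nonzero), we conclude that Bob wins on $P_n$ iff $4\mid n$, which is exactly the ``Consequently'' clause.

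\textbf{Main obstacle.} I expect the real work to be in the inductive step verifying the $\mex$ computation: one must check, for each of the (roughly $3\times 4$, plus a handful of small-$n$) cases, that among the xors $g(i-1,a,1)\xor g(n-i,1,b)$ ranging over all \emph{playable} positions $i$, every value in $\{0,\dots,g(n,a,b)-1\}$ occurs and $g(n,a,b)$ does not. The delicate points are (i) correctly identifying which $v_i$ are playable --- in particular interior vertices whose both neighbours are already dominated are \emph{not} legal moves, so those split-products must be excluded from the $\mex$ --- and (ii) handling the boundary vertices $v_1,v_n$ (and $v_2,v_{n-1}$) separately, since there the ``$b=1$'' or ``$a=1$'' flag on a resulting piece may or may not be forced. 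Keeping the bookkeeping of these small cases straight, rather than any deep idea, is where the effort lies; everything else is routine $\xor$-arithmetic.
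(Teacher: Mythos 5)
Your plan is correct and is essentially the paper's own argument in different notation: your positions $P_m^{10}$ and $P_m^{11}$ are exactly the paper's $P_{m-1}'$ and $P_{m-2}''$ (whose nimbers are shown to be $(m-1)\%4$ and $(m-2)\%4$ in Lemma \ref{lemma1}), the split of a move at $v_i$ into two independent pieces is the paper's decomposition into $P''$- and $P'$-type subgames, and your deferred $\mex$/$\xor$ verification is carried out in the paper via the small table of $r(n,k)=(k-3)\%4 \xor (n-k)\%4$ in Claim \ref{claim1}. The only thing missing is the actual execution of the base-case table and the inductive $\mex$ check, which you correctly identify as the locus of all the work.
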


As an application of this theorem, we can conclude by the Sprague-Grundy theory that Bob wins the \textsc{Normal Domination Game} on the disjoint union of the paths $P_3$, $P_6$ and $P_7$, since the total nimber is the bitwise xor operation of the nimbers of these graphs, which is $2\xor 1\xor 3=0$.

%First notice that, in the paths $P_1$ and $P_2$, the first player wins in the first move, indicating that the nimber of these graphs is 1, according to the Sprague-Grundy theory. In the path $P_3$, the first player can win immediately by choosing the central vertex (returning a position with nimber 0), or can choose an end of the path (returning a position with nimber 1), indicating that its nimber is $\mex\{0,1\}=2$. In the path $P_4$, notce that the first player always loses, indicating that its nimber is 0.

In order to prove this theorem, we define $P_n'$ in the \textsc{Normal Domination Game} as the path $P_{n+2}$ in which the first vertex is the only vertex already played (that is, a player selected it in the game and then it belongs to the final dominating set). Analogously, we define $P_n''$ as the path $P_{n+4}$ in which the first and the last vertices are the only ones already played. Note that $P_n$, $P_n'$ and $P_n''$ have exactly $n$ vertices that must be dominated in the game.

We first prove the following lemma regarding the nimbers of $P_n'$ and $P_n''$.

\begin{lemma}\label{lemma1}
The nimbers of $P_n'$ and $P_n''$ are equal to $n\% 4$.
\end{lemma}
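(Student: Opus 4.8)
The plan is to prove the lemma by strong induction on $n$, computing the nimbers of $P_n'$ and $P_n''$ simultaneously, since a move inside one of these configurations typically splits it into a disjoint union of smaller configurations of the same three types ($P_k$, $P_k'$, $P_k''$). First I would set up small base cases: for $n \in \{0,1,2,3\}$ compute directly the nimbers of $P_n'$ and $P_n''$ by enumerating all playable vertices and the resulting positions (noting $P_0'$ and $P_0''$ are fully dominated, hence have nimber $0$, matching $0\%4$). The main work is the inductive step.

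For the inductive step I would analyze, for $P_n'$ (the path $v_0 v_1 \cdots v_{n+1}$ with $v_0$ already played, so $v_1,\dots,v_{n+1}$ must be dominated; note $v_1$ is already dominated by $v_0$), every legal move $v_i$ and determine the nimber of the resulting position. Playing $v_i$ dominates $v_{i-1},v_i,v_{i+1}$; the still-undominated vertices split into a left segment near $v_0$ and a right segment near $v_{n+1}$, and each segment is again one of our configurations, with the already-played endpoints ($v_0$, $v_i$, and possibly $v_{n+1}$) playing the role of the already-played vertices in $P'$ and $P''$. For instance, a move far from both ends produces a disjoint union $P_a'' \sqcup P_b''$ (roughly), while a move near an end produces a single $P_c'$ or $P_c''$ or $P_c$; one must be careful about the boundary cases where $i$ is within distance $2$ of an endpoint, since then the ``segment'' degenerates or the already-dominated vertex $v_1$ near $v_0$ changes whether a short piece is already fully dominated. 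Using the induction hypothesis and Lemma for $P_n$ (Theorem \ref{teo-nimb}, which for consistency would be proved from this lemma, so care is needed about logical order — I would instead fold the $P_n$ computation into the same simultaneous induction), I would compute the xor of the two resulting nimbers for each move, collect the set of all reachable nimbers, and verify that its $\mex$ equals $n\%4$. The analogous enumeration is carried out for $P_n''$.

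The key arithmetical fact that makes $\mex$ come out to $n\%4$ is that the reachable nimbers are always exactly (a superset containing) $\{0,1,\dots,(n\%4)-1\}$ while never containing $n\%4$ itself. Concretely, since consecutive residues mod $4$ appear as we vary the split point $a+b$ of a fixed total length, the xor values $a\%4 \oplus b\%4$ sweep out the needed small values; the delicate direction is showing $n\%4$ is \emph{not} reachable, i.e. that no move yields two pieces whose residues xor to $n\%4$ — this is where one uses that the three lengths (left piece, the three dominated vertices, right piece) sum to $n$ (or $n\pm$ a constant depending on configuration), so $a\%4 \oplus b\%4 = n\%4$ is obstructed by a parity/residue count.

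The main obstacle I anticipate is bookkeeping the boundary cases cleanly: when the chosen vertex is close to an already-played endpoint, the residual segment may be of type $P'$ rather than $P''$ (one fewer already-played neighbor), or a length-$1$ or length-$2$ stub may already be dominated and thus contribute nimber $0$ and disappear, or — conversely — playing adjacent to $v_0$ in $P_n'$ may not be a legal move at all since it dominates no new vertex. Getting the case split on $i \bmod$ small numbers and on $i$'s distance to each end exactly right, and checking that in every case the resulting decomposition's total nimber is computed with the correct configuration types, is the crux; once that table is assembled correctly, the $\mex$ computation is a routine finite check for each residue class of $n$.
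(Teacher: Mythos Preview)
Your overall plan---strong induction on $n$, decompose after each move, compute the mex---is exactly the paper's approach, but you are overcomplicating the decomposition in a way that creates phantom difficulties.

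The key point you are missing is that a move in $P_n'$ or $P_n''$ \emph{never} produces a plain $P_k$: the vertex you just selected becomes a played endpoint of each resulting segment, so every piece is again of type $P'$ or $P''$. Concretely, in $P_n'=v_0v_1\cdots v_{n+1}$ (with $v_0$ played), a move on $v_k$ with $4\le k\le n-1$ yields $P''_{k-3}$ on the left (both $v_0$ and $v_k$ played) and $P'_{n-k}$ on the right (only $v_k$ played; the far end $v_{n+1}$ is unplayed), not $P''\sqcup P''$. In $P_n''$ the analogous split is $P''_{k-3}\sqcup P''_{n-k}$. In both cases the total nimber is $r(n,k)=(k-3)\%4\ \oplus\ (n-k)\%4$, and a short residue check (the paper's Claim~\ref{claim1}) shows $r(n,k)\ne n\%4$ for all $k$. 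The three near-end moves $v_1,v_2,v_3$ give $P'_{n-1},P'_{n-2},P'_{n-3}$ (resp.\ $P''$), contributing exactly $(n-1)\%4,(n-2)\%4,(n-3)\%4$ to the option set, so the mex is $n\%4$.

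Two specific worries you raised are non-issues: there is no circular dependence with Theorem~\ref{teo-nimb}, since plain $P_k$ never occurs as a subposition and the induction closes over $P'$ and $P''$ alone; and playing $v_1$ in $P_n'$ \emph{is} legal for $n\ge 1$, since it newly dominates $v_2$. Once you drop these concerns the ``delicate bookkeeping'' you anticipate essentially disappears---the boundary cases are just the moves $v_1,v_2,v_3$ (and symmetrically at the far end), each producing a single smaller $P'$ or $P''$.
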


\begin{proof}
The proof is by induction on $n$.
We first show the four base cases of $P_n''$ with $n=1,2,3,4$.
Since $P_1''$ is a $P_5$ $v_1v_2v_3v_4v_5$ where $v_1$ and $v_5$ were already selected, the three possible moves ($v_2,v_3,v_4$) win immediately (nimber $0$), and then the nimber of $P_1''$ is 1.
Since $P_2''$ is a $P_6$ $v_1\ldots v_6$ where $v_1$ and $v_6$ were already selected, there are four possible moves ($v_2$ to $v_5$), which win immediately (nimber $0$ in $v_3$ or $v_4$) or obtains a $P_1''$ (nimber 1 in $v_2$ or $v_5$), and then the nimber of $P_2''$ is $\mex\{0,1\}=2$.
Since $P_3''$ is a $P_7$ $v_1\ldots v_7$ where $v_1$ and $v_7$ were already selected, there are five possible moves ($v_2$ to $v_6$), which win immediately (nimber $0$ in $v_4$) or obtains a $P_1''$ (nimber 1 in $v_3$ or $v_5$) or obtains a $P_2''$ (nimber 2 in $v_2$ or $v_6$), and then the nimber of $P_3''$ is $\mex\{0,1,2\}=3$.
Finally, since $P_4''$ is a $P_8$ $v_1\ldots v_8$ where $v_1$ and $v_8$ were already selected, there are six possible moves ($v_2$ to $v_7$), which obtains a $P_1''$ (nimber 1 in $v_4$ or $v_5$) or obtains a $P_2''$ (nimber 2 in $v_3$ or $v_6$) or obtains a $P_3''$ (nimber 3 in $v_2$ or $v_7$), and then the nimber of $P_4''$ is $\mex\{1,2,3\}=0$.

Now we show the four base cases of $P_n'$ with $n=1,2,3,4$.
Since $P_1'$ is a $P_3$ $v_1v_2v_3$ where $v_1$ was already selected in the game, the two possible moves ($v_2$ and $v_3$) wins immediately (nimber $0$), and then the nimber of $P_1'$ is 1.
Since $P_2'$ is a $P_4$ $v_1v_2v_3v_4$ where $v_1$ was already selected, there are three possible moves ($v_2,v_3,v_4$), which obtains a $P_1'$ (with nimber 1 in $v_2$) or wins immediately (nimber $0$ in $v_3$ or $v_4$), and then the nimber of $P_2'$ is $\mex\{0,1\}=2$.
Since $P_3'$ is a $P_5$ $v_1v_2v_3v_4v_5$ where $v_1$ was already selected, there are four possible moves ($v_2,v_3,v_4,v_5$), which obtains a $P_2'$ (with nimber 2 in $v_2$) or obtains a $P_1'$ (with nimber 1 in $v_3$) or wins immediately (nimber $0$ in $v_4$) or obtains a $P_1''$ (with nimber 1 in $v_5$), and then the nimber of $P_3'$ is $\mex\{2,1,0,1\}=3$.
Finally, since $P_4'$ is a $P_6$ $v_1v_2v_3v_4v_5v_6$ where $v_1$ was already selected, there are five possible moves ($v_2,v_3,v_4,v_5,v_6$), which obtains a $P_3'$ (with nimber 3 in $v_2$) or obtains a $P_2'$ (with nimber 2 in $v_3$) or obtains a $P_1'$ (with nimber 1 in $v_4$) or obtains a $P_1''$ (with nimber 1 in $v_5$) or obtains a $P_2''$ (with nimber 2 in $v_6$), and then the nimber of $P_4'$ is $\mex\{3,2,1,1,2\}=0$.

So, let $n\geq 5$ and suppose by induction that the lemma holds for every value less than $n$.
First consider the game on $P_n'$ with vertices $v_0,v_1,\ldots,v_{n+1}$, where $v_0$ is the only vertex already selected. Let us enumerate the possible moves of the next player.
If the move is on $v_1$, $v_2$ or $v_3$, it obtains $P_{n-1}'$, $P_{n-2}'$ and $P_{n-3}'$, respectively, with nimbers $(n-1)\% 4$,  $(n-2)\% 4$ and $(n-3)\% 4$ by induction.
If the move is on $v_{n+1}$ or $v_n$, it obtains $P_{n-2}''$ and $P_{n-3}''$, respectively, with nimbers $(n-2)\% 4$ and $(n-3)\% 4$.
So, suppose that the first move is on $v_k$ for $k=4,\ldots,n-1$.
This move obtains two graphs, $P''_{k-3}$ and $P'_{n-k}$, with total nimber $r(n,k)=(k-3)\% 4\ \xor (n-k)\% 4$.
From Claim \ref{claim1} below, $r(n,k)$ is always different from $n\% 4$. Thus, the nimber of $P'_n$ is
\[
\mex\Big\{\ (n-1)\% 4,\ (n-2)\% 4,\ (n-3)\% 4\ \Big\}\ =\ n\% 4.
\]

Now consider the game on $P_n''$ with vertices $v_0,v_1,\ldots,v_{n+3}$, where $v_0$ and $v_{n+3}$ are the only selected vertices. Let us enumerate the possible moves of the next player.
If the move is on $v_1$, $v_2$ or $v_3$, it obtains $P_{n-1}''$, $P_{n-2}''$ and $P_{n-3}''$, respectively, with nimbers $(n-1)\% 4$,  $(n-2)\% 4$ and  $(n-3)\% 4$.
The same if the move is on $v_{n+2}$, $v_{n+1}$ or $v_n$.
So suppose that the move is on $v_k$ for $k=4,\ldots,n-1$.
It obtains two graphs $P''_{k-3}$ and $P''_{n-k}$, with total nimber $r(n,k)=(k-3)\% 4\ \xor (n-k)\% 4$, defined above. From Claim \ref{claim1} below, $r(n,k)$ is always different from $n\% 4$. Thus, the nimber of $P_n''$ is
\[
\mex\Big\{\ (n-1)\% 4,\ (n-2)\% 4,\ (n-3)\% 4\ \Big\}\ =\ n\% 4.
\]
\end{proof}

\begin{claim}\label{claim1}
Let $n$ and $k$ be integers. Then $r(n,k)=(k-3)\% 4\ \xor (n-k)\% 4$ is always different from $n\% 4$, where $\xor$ is the bitwise xor operation. Moreover, $r(n,k)$ is equal to $(n-1)\% 4$ if and only if $n\% 4\in\{1,3\}$ and $k\% 4\in\{0,2\}$.
\end{claim}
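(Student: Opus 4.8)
The plan is to prove Claim~\ref{claim1} by a brute-force case analysis on residues modulo~$4$, since both $(k-3)\%4$ and $(n-k)\%4$ depend only on $n\%4$ and $k\%4$. First I would set $a = (k-3)\%4$ and $b = (n-k)\%4$, and observe that $a+b \equiv (k-3)+(n-k) \equiv n-3 \equiv n+1 \pmod 4$, so the unordered pair $\{a,b\}$ is constrained by the single parameter $n\%4$. Concretely, as $k$ ranges over the four residue classes mod~$4$, the pair $(a,b)$ takes the four values obtained by writing $n+1 \bmod 4$ as an ordered sum of two elements of $\{0,1,2,3\}$ in the ``complementary'' way forced by the two separate mod reductions; I would simply tabulate, for each $n\%4 \in \{0,1,2,3\}$ and each $k\%4 \in \{0,1,2,3\}$, the values $a$, $b$, and $a\xor b$.

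The first assertion — that $r(n,k) = a\xor b$ is never equal to $n\%4$ — then follows by inspecting the $16$-entry table and checking that in no row does the xor-column match the target residue. I would present this as a compact $4\times 4$ table (four displayed lines, one per value of $n\%4$, listing the four possible values of $a \xor b$) rather than prose, and note the key structural reason it works: since $a \xor b \le a + b$ in the relevant small range and $a+b \equiv n+1 \pmod 4$, the only way to hit $n\%4$ would require a carry, and a short check rules out the one or two borderline cases. For the second assertion, I would read off from the same table exactly which $(n\%4, k\%4)$ pairs give $a \xor b = (n-1)\%4$, and verify that these are precisely the pairs with $n\%4 \in \{1,3\}$ and $k\%4 \in \{0,2\}$.

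I expect the main (though still modest) obstacle to be bookkeeping: making sure the two independent mod-$4$ reductions in $a$ and $b$ are carried out correctly — e.g.\ for $n\%4 = 0$ and $k\%4 = 1$ one has $a = (1-3)\%4 = 2$ and $b = (0-1)\%4 = 3$, giving $a\xor b = 1 \ne 0$, and similar care is needed in every cell, particularly the negative-argument cases where one must remember $\%4$ returns a value in $\{0,1,2,3\}$. Since the claim is stated for all integers $n,k$ (not just those arising in the lemma), I would emphasize at the outset that only the residues matter, so the finite table is genuinely exhaustive. Once the table is in place, both parts of the claim are immediate, and the proof closes with no further argument needed; I would end the \emph{proofclaim} environment there.
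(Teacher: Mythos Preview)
Your proposal is correct and follows essentially the same approach as the paper: reduce to residues modulo~$4$ and verify both assertions by inspecting the resulting $4\times 4$ table of xor values. The paper's proof is slightly terser (it rewrites $(k-3)\%4$ as $(k+1)\%4$ to sidestep negative arguments and presents the table without the supplementary $a+b\equiv n+1$ observation), but the substance is identical.
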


\begin{proof}
Notice that $r(n,k)=(k+1)\% 4\ \xor (n-k)\% 4$.
Table \ref{table1} shows all 16 possible values of $r(n,k)$ depending on the remainders of $n$ and $k$ modulo 4.
Notice that $r(n,k)$ is always different from $n\%4$ and that $r(n,k)$ is equal to $(n-1)\% 4$ if and only if $n\% 4\in\{1,3\}$ and $k\% 4\in\{0,2\}$.

\begin{table}[h!]\centering
\begin{tabular}{ |c|c|c|c|c| }
\hline
         & $k\%4=0$    & $k\%4=1$    & $k\%4=2$    & $k\%4=3$\\
\hline
$n\%4=0$ & $1\xor 0=1$ & $2\xor 3=1$ & $3\xor 2=1$ & $0\xor 1=1$\\
\hline
$n\%4=1$ & $1\xor 1=0$ & $2\xor 0=2$ & $3\xor 3=0$ & $0\xor 2=2$\\
\hline
$n\%4=2$ & $1\xor 2=3$ & $2\xor 1=3$ & $3\xor 0=3$ & $0\xor 3=3$\\
\hline
$n\%4=3$ & $1\xor 3=2$ & $2\xor 2=0$ & $3\xor 1=2$ & $0\xor 0=0$\\
\hline
\end{tabular}
\caption{Values of $r(n,k)$. Note that $r(n,k)\ne n\% 4$.}\label{table1}
\end{table}
\end{proof}

Now we are able to determine the nimbers of the paths $P_n$, using Lemma \ref{lemma1} on $P_n'$ and $P_n''$.

\begin{proof}[Proof of Theorem \ref{teo-nimb}]
Consider the \textsc{Normal Domination Game} on the path $P_n$ with vertices $v_1,v_2,\ldots,v_n$. Let us enumerate all possible moves of the next player.
If the move is on $v_1$ or $v_2$, it obtains $P_{n-2}'$ and $P_{n-3}'$, respectively, with nimbers $(n-2)\% 4$ and $(n-3)\% 4$ from Lemma \ref{lemma1}.
The same if the move is on $v_n$ or $v_{n-1}$.
So, suppose that the move is on $v_{k-1}$ for $k=4,\ldots,n-1$. This move obtains two graphs $P'_{k-3}$ and $P'_{n-k}$, with total nimber $r(n,k)=(k-3)\% 4\ \xor (n-k)\% 4$ from Lemma \ref{lemma1}. From Claim \ref{claim1}, we have that $r(n,k)$ is always different from $n\% 4$. Moreover, also from Claim \ref{claim1}, $r(n,k)$ is equal to $(n-1)\% 4$ if and only if $n\% 4\in\{1,3\}$ and $k\% 4\in\{0,2\}$.

Then, by applying the minimum excludent $\mex$, we obtain the following.
If $n\% 4\in\{0,2\}$, the nimber is $\min\{n\% 4, (n-1)\% 4\}$.
If $n\% 4\in\{1,3\}$, the nimber is $n\% 4$.
Thus, the nimber of $P_n$ is equal to 0, 1, 1 or 3 depending whether $n\% 4$ is 0, 1, 2 or 3.

Therefore, from the Sprague-Grundy theory, the second player (Bob) wins the \textsc{Normal Domination Game} on $P_n$ if and only if $n\% 4=0$, that is, $n$ is a multiple of $4$. 
\end{proof}

%%%%%%%%%%%%%%%%%%%%%%%%%%%%%%%%%%%%%%%%%%%%%%%%%%%%%%%%%
%%%%%%%%%%%%%%%%%%%%%%%%%%%%%%%%%%%%%%%%%%%%%%%%%%%%%%%%%
%%%%%%%%%%%%%%%%%%%%%%%%%%%%%%%%%%%%%%%%%%%%%%%%%%%%%%%%%
%%%%%%%%%%%%%%%%%%%%%%%%%%%%%%%%%%%%%%%%%%%%%%%%%%%%%%%%%
\subsection{Cycles}\label{sec:cycles}

As a consequence of Lemma \ref{lemma1}, we also solve the game for cycles.

\begin{theorem}\label{teo-nimb2}
Consider the \textsc{Normal Domination Game} and let $n\geq 3$. The nimber of the cycle $C_n$ is equal to 1 if $n\% 4=3$, and is equal to 0, otherwise. Consequently, Alice wins the game on $C_n$ if and only if $n\% 4=3$ (that is, $n=4k+3$ for some integer $k$).
\end{theorem}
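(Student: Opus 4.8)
The plan is to reduce the cycle to the already-solved path gadget $P_n''$ by exploiting the vertex-transitivity of $C_n$. Write $C_n$ with cyclically ordered vertices $v_1,\dots,v_n$. Since $C_n$ is vertex-transitive, every first move of the \textsc{Normal Domination Game} on $C_n$ is equivalent, so I would analyze a single representative move, say the one on $v_1$. Playing $v_1$ dominates exactly the three consecutive vertices $v_n,v_1,v_2$, leaving precisely the $n-3$ vertices $v_3,v_4,\dots,v_{n-1}$ still to be dominated.

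Next I would identify the resulting position with $P_{n-3}''$. The key observation is that an already‑played (hence dominated) vertex never again influences the game: no future move can change its status. Therefore, from the game's point of view, the position after playing $v_1$ is exactly the path $v_2v_3\cdots v_n$ (on $n-1$ vertices) in which the two endpoints $v_2$ and $v_n$ are already dominated — being neighbours of the played vertex $v_1$ — and exactly $n-3$ interior vertices remain undominated. That is the definition of $P_{n-3}''$. (For $n=3$ this degenerates to $P_0''$, the finished game of nimber $0$, consistent with $0\%4=0$.)

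Then I would invoke Lemma~\ref{lemma1}: the nimber of $P_{n-3}''$ equals $(n-3)\%4$. Since every first move of $C_n$ leads to a position of this nimber, the $\mex$ rule of the Sprague–Grundy theory gives that the nimber of $C_n$ is $\mex\{(n-3)\%4\}$. A one-line check — $\mex\{0\}=1$ and $\mex\{1\}=\mex\{2\}=\mex\{3\}=0$ — shows this equals $1$ exactly when $(n-3)\%4=0$, i.e. $n\%4=3$, and equals $0$ otherwise. Consequently Alice (the first player) wins iff the nimber is positive, i.e. iff $n\%4=3$, that is $n=4k+3$.

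The only genuinely subtle point — the main thing to get right — is the identification of the post-first-move position with $P_{n-3}''$: one must argue carefully that it is irrelevant that, in $C_n$, the two boundary dominated vertices $v_2,v_n$ share the same played neighbour $v_1$, whereas in the definition of $P_{n-3}''$ the two boundary dominated vertices have distinct played neighbours. Because those played vertices are inert for the remainder of the game, the two game trees are isomorphic, and this is exactly the step that needs to be spelled out; everything after it is an immediate application of Lemma~\ref{lemma1} and a finite computation.
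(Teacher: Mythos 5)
Your proposal is correct and follows essentially the same route as the paper: use the symmetry of $C_n$ to reduce to a single first move, identify the resulting position with $P_{n-3}''$, and apply Lemma~\ref{lemma1} together with the $\mex$ rule. The only difference is that you spell out the equivalence with $P_{n-3}''$ (the inertness of the played vertex serving as both endpoints) more explicitly than the paper does, which is a welcome clarification rather than a deviation.
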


\begin{proof}
Consider the \textsc{Normal Domination Game} on the cycle $C_n$ with vertices $v_1,v_2,\ldots,v_n$. 
Without loss of generality, we may assume that the first player selected the vertex $v_1$.
This move obtains a position equivalent to a $P_{n-3}''$, whose nimber is $(n-3)\% 4$ from Lemma \ref{lemma1}.
Then, by applying the minimum excludent $\mex$, we have that the nimber of $C_n$ is $\mex\{(n-3)\% 4\}$, which is 1, if $n\% 4=3$, and is 0, otherwise.

Therefore, from the Sprague-Grundy theory, the first player (Alice) wins the \textsc{Normal Domination Game} on $C_n$ if and only if $n\% 4=3$, that is, $n=4k+3$ for some integer $k$.
\end{proof}

As an application of Theorem \ref{teo-nimb2}, we can conclude from the Sprague-Grundy theory that the second player (Bob) wins the \textsc{Normal Domination Game} on the disjoint union of $C_3$, $C_4$, $C_5$, $C_6$ and $C_7$, since the total nimber is the bitwise xor operation of the nimbers of these graphs, which is $1\xor 0\xor 0\xor 0\xor 1=0$.

\begin{figure}[ht!] \centering
\scalebox{0.8}{
\begin{tikzpicture}[scale=1]
\tikzstyle{V}=[draw,circle,fill=black!00,inner sep=3pt, minimum size=3pt]
\tikzstyle{D}=[draw,circle,fill=black!20,inner sep=3pt, minimum size=3pt]

\node[V] (a) at (4,0) {};
\node[V] (b) at (4,1) {}; \draw (a)--(b);
\node[V] (a) at (4,2) {}; \draw (a)--(b);
\node[V] (b) at (4,3) {}; \draw (a)--(b);
\node[D] (x) at (4,4) {}; \draw (x)--(b);
\node[V] (a) at (3.5,5) {}; \draw (x)--(a);
\node[V] (b) at (2.5,5.5) {}; \draw (a)--(b);
\node[V] (a) at (1.5,5.7) {}; \draw (a)--(b);
\node[V] (a) at (4.5,5) {}; \draw (x)--(a);
\node[V] (b) at (5.5,5.5) {}; \draw (a)--(b);
\node[V] (a) at (3,4.5) {}; \draw (x)--(a);
\node[V] (b) at (2,4.5) {}; \draw (a)--(b);
\node[V] (a) at (1,4) {}; \draw (a)--(b);
\node[V] (b) at (2,3.5) {}; \draw (a)--(b);
\node[V] (a) at (3,3.5) {}; \draw (x)--(a); \draw (a)--(b);

\node[V] (a) at (5,4.5) {}; \draw (x)--(a);
\node[V] (b) at (5.5,4.5) {}; \draw (a)--(b);
\node[V] (a) at (6.0,4.5) {}; \draw (a)--(b);
\node[V] (b) at (6.5,4.5) {}; \draw (a)--(b);
\node[V] (a) at (7,4) {}; \draw (a)--(b);
\node[V] (b) at (6.5,3.5) {}; \draw (a)--(b);
\node[V] (a) at (6.0,3.5) {}; \draw (a)--(b);
\node[V] (b) at (5.5,3.5) {}; \draw (a)--(b);
\node[V] (a) at (5,3.5) {}; \draw (a)--(b); \draw (x)--(a);
\end{tikzpicture}}
\caption{\label{fig:dom-nimber1} Alice wins the \textsc{Normal Domination Game} in the gray vertex.}
\end{figure}

Figura \ref{fig:dom-nimber1} shows another example, where Alice wins by playing on the gray vertex in her first move, since this returns a position equivalent to the disjoint union of $P_1'$, $P_2'$, $P_3'$, $P_3''$ and $P_7''$, with total nimber equal to $1\xor 2\xor 3\xor 3\xor 3=0$.

%Resultados similares também foram obtidos para a variante partizan do Jogo de Dominação, onde cada vértice é rotulado com A ou B na instância (antes do início do jogo) e Alice (resp. Bob) só pode selecionar vértices rotulados com A (resp. B). Usando números surreais e a Teoria Combinatória dos Jogos vista no Capítulo \ref{cap3}, Brito25 obtiveram um algoritmo polinomial de programação dinâmica para decidir o vencedor desta variante do jogo.

%%%%%%%%%%%%%%%%%%%%%%%%%%%%%%%%%%%%%%%%%%%%%%%%%%%%%%%%%
%%%%%%%%%%%%%%%%%%%%%%%%%%%%%%%%%%%%%%%%%%%%%%%%%%%%%%%%%
%%%%%%%%%%%%%%%%%%%%%%%%%%%%%%%%%%%%%%%%%%%%%%%%%%%%%%%%%
%%%%%%%%%%%%%%%%%%%%%%%%%%%%%%%%%%%%%%%%%%%%%%%%%%%%%%%%%
\section{Partizan variants are PSPACE-complete}

In this section, we investigate for the first time the partizan variants of the \textsc{Domination Game} in normal play and misère play, which we call \textsc{Normal Partizan Domination Game} and \textsc{Misère Partizan Domination Game}.
As done in the literature of combinatorial games, a partizan variant of an impartial game can be obtained by modifying the instances in such a way that the set of possible moves of Alice is disjoint from the set of possible moves of Bob.
See for example the classical book \emph{Winning Ways for your Mathematical Plays} from Berlekamp, Conway and Guy \cite{conway82} and the games Hackenbush (impartial) and Blue-Red Hackenbush (partizan).

In the partizan variants of the domination game, the instance is a graph $G$ such that the vertices are colored either $A$ or $B$, and Alice (resp. Bob) can only select vertices colored $A$ (resp. $B$). As in the impartial variants, the players alternately select playable vertices. In normal (resp. misère) play, the last to play wins (resp. loses).

In this section, we prove the computational complexity of the partizan variants.

\begin{theorem}
The \textsc{Normal} (resp. \textsc{Misère}) \textsc{Partizan Domination Game} is \emph{PSPACE}-complete even in graphs with diameter two (resp. four).
\end{theorem}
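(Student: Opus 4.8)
The plan is to reduce from the impartial variants, whose hardness is given by Theorems~\ref{thm-pspace-normal} and~\ref{thm-pspace-misere}, via a single ``edge blow-up'' construction that turns any instance of an impartial domination game into a $2$-coloured instance with essentially the same game tree and the same diameter. Membership in PSPACE is immediate for both partizan variants, since every play lasts at most $|V(G)|$ moves and the winner is decided by the usual alternating recursion over positions.

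Given a graph $G$ (an instance of the impartial \textsc{Normal}, resp.\ \textsc{Misère}, \textsc{Domination Game}), I would build $G'$ as follows: replace each vertex $v$ of $G$ by two vertices $v_A$ (coloured $A$) and $v_B$ (coloured $B$); add the edge $v_Av_B$; and for each edge $vw$ of $G$ add all four edges $v_Aw_A,v_Aw_B,v_Bw_A,v_Bw_B$. Then $N_{G'}[v_A]=N_{G'}[v_B]$ is exactly the ``blow-up'' of $N_G[v]$. The core of the proof is the claim, proved by induction on the number of moves, that in any play of the partizan game on $G'$ the set of dominated vertices is precisely the blow-up of the set of dominated vertices in the corresponding play on $G$; in particular $x_A$ is dominated iff $x_B$ is dominated iff $x$ is dominated in $G$. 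From this I would extract two facts: (i) $v_A$ (equivalently $v_B$) is playable in $G'$ exactly when $v$ is playable in $G$, since both reduce to ``$N_G[v]$ contains an undominated vertex''; and (ii) as soon as $v_A$ has been selected, $v_B$ is unplayable, because its whole closed neighbourhood is already dominated (this is where the edge $v_Av_B$ together with monotonicity of domination is used), and symmetrically --- so at most one of $v_A,v_B$ is ever chosen.

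Facts (i)--(ii) give a bijection, in every reachable position, between the moves of the player to move in $G'$ and the moves available in the projected position of $G$ (Alice must pick an $A$-vertex $v_A$, Bob a $B$-vertex $v_B$, and both ``mean'' playing $v$ in $G$), with terminal positions matching terminal positions. An induction from the empty position then shows that the player to move wins the partizan game on $G'$ iff she wins on the projected position of $G$; applied at the start, the first player wins on $G'$ iff she wins on $G$, and this holds verbatim under both the normal and the misère convention. It remains to check diameters: projecting any path of $G'$ to $G$ (dropping the $v_Av_B$-edges) gives a walk of no greater length, so $\mathrm{dist}_G(v,w)\le\mathrm{dist}_{G'}(v_A,w_A)$, while $\mathrm{dist}_{G'}(v_A,w_B)\le\mathrm{dist}_G(v,w)$ because $v_A$ is adjacent to every copy of a $G$-neighbour of $v$; since the source instances have at least one edge, $G'$ has exactly the same diameter as $G$. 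Feeding the diameter-two instances of Theorem~\ref{thm-pspace-normal} into the construction yields PSPACE-hardness of the \textsc{Normal Partizan Domination Game} in diameter-two graphs, and feeding the diameter-four instances of Theorem~\ref{thm-pspace-misere} yields PSPACE-hardness of the \textsc{Misère Partizan Domination Game} in diameter-four graphs.

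I expect the main obstacle to be the bookkeeping in the inductive claim on dominated sets: one must be careful that the ``at most one of $v_A,v_B$ is played'' property is genuinely forced, and that no play on $G'$ can escape to a configuration with no counterpart on $G$ --- which is exactly what the move-bijection, applied inductively from the empty position, is designed to prevent. Everything else (PSPACE membership, the diameter computation, and transferring winners between the normal and misère conventions) is routine once this claim is in place.
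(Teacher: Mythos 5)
Your construction is, up to renaming, exactly the paper's: the paper adds for each $v$ a twin $v'$ adjacent to $v$, to all $H$-neighbours $u$ of $v$, and to their twins $u'$, which makes $N[v]=N[v']$ --- precisely your blow-up with $v_A=v$, $v_B=v'$ --- and the paper likewise concludes that selecting one twin renders the other unplayable, so the partizan game mimics the impartial one and the diameter is preserved. Your write-up just spells out the move-bijection induction that the paper leaves implicit; the argument is correct and essentially identical.
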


\begin{proof}
Consider an impartial domination game on normal play or misère play on a graph $H$ with at least one edge. Let $G$ be the graph obtained from $H$ by adding, for every vertex $v$ of $H$, a new vertex $v'$ and the edges $vv'$, $uv'$ and $u'v'$ for every neighbor $u$ of $v$ in $H$. Color the vertices of $G$ in the following way: for every vertex $v$ of $H$, $v$ is colored $A$ and $v'$ is colored $B$. Notice that, if $v$ (resp. $v'$) is selected during the domination game, then $v'$ (resp. $v$) is not playable anymore, since $v$ (resp. $v'$) dominates the same vertices that can be dominated by $v'$ (resp. $v$). This means that the normal and misère partizan domination games behave exactly like the impartial domination games. Therefore, since the diameter of $G$ is the same of $H$, then the \textsc{Normal Partizan Domination Game} is PSPACE-complete even in graphs with diameter two from Theorem \ref{thm-pspace-normal} and the \textsc{Misère Partizan Domination Game} is PSPACE-complete even in graphs with diameter four from Theorem \ref{thm-pspace-misere}.
\end{proof}

%%%%%%%%%%%%%%%%%%%%%%%%%%%%%%%%%%%%%%%%%%%%%%%%%%%%%%%%%
%%%%%%%%%%%%%%%%%%%%%%%%%%%%%%%%%%%%%%%%%%%%%%%%%%%%%%%%%
%%%%%%%%%%%%%%%%%%%%%%%%%%%%%%%%%%%%%%%%%%%%%%%%%%%%%%%%%
%%%%%%%%%%%%%%%%%%%%%%%%%%%%%%%%%%%%%%%%%%%%%%%%%%%%%%%%%
\section{Partizan game in paths and cycles}

Here, we use Conway's Combinatorial Game Theory to solve the \textsc{Normal Partizan Domination Game} in paths and cycles. In order to do this, we need to explain some technical details of Conways's theory, which is focused on the normal play
of partizan games.

%%%%%%%%%%%%%%%%%%%%%%%%%%%%%%%%%%%%%%%%%%%%%%%%%%%%%%%%%
%%%%%%%%%%%%%%%%%%%%%%%%%%%%%%%%%%%%%%%%%%%%%%%%%%%%%%%%%
\subsection{Combinatorial Game Theory and the Notation \texorpdfstring{$\{X|Y\}$}{X|Y}}

Here we give a very short introduction to the Combinatorial Game Theory in order to help us applying it later.
In this subsection, a \emph{game} mean a position of a partizan game.
Every game $J$ is denoted by $J=\{\mathcal{J}^A|\mathcal{J}^B\}$ where $\mathcal{J^A}$ and $\mathcal{J}^B$ are the \emph{options} of Alice and Bob, respectively, that is, $\mathcal{J}^A$ (resp. $\mathcal{J}^B$) is the set of games that can be obtained after a move of Alice (resp. Bob).
For simplification, it is common to write $J=\{J^{A_1},\ldots,J^{A_m}|J^{B_1},\ldots,J^{B_n}\}$, without brackets, when $\mathcal{J}^A=\{J^{A_1},\ldots,J^{A_m}\}$ and $\mathcal{J}^B=\{J^{B_1},\ldots,J^{B_n}\}$.
%In the literature, it is common to abuse the notation even more and write $J=\{J^A|J^B\}$, when $J^A$ ranges in Alice's options and $J^B$ ranges in Bob's options.

The games below are important and are denoted by $0$, $1$, $-1$, $*$, $\uparrow$ and $\downarrow$.
In the empty game $0=\{|\}$, Alice and Bob have no moves and then the first to play loses.
In the game $1=\{0|\}$, Bob has no move and any move of Alice wins, leading to a game 0.
In the game $-1=\{|0\}$, Alice has no move and any move of Bob wins, leading to a game 0.
In the game star $*=\{0|0\}$, the first to play wins, leading to a game 0.
Moreover, $\uparrow=\{0|*\}$ and $\downarrow=\{*|0\}$.
The games $n$ and $-n$ can also be defined for every integer $n\geq 2$. In the game $n=\{n-1|\}$, Alice has $n$ more moves than Bob. In the game $-n=\{|-(n-1)\}$, Bob has $n$ more moves than Alice.

The Combinatorial Game Theory is a very rich theory with many results and definitions, such as infinitesimals, canonical form of a game, the Simplification Theorem for Numbers, Temperature Theory for hot games and so on.
Here the main points are the following. A game is equal to $0$ if and only if the first player loses, independently if the first player is Alice or Bob.
Moreover, the sum $J_1+J_2$ of two games is the game resulting of the disjoint union of the games $J_1$ and $J_2$, that is, the player chooses either $J_1$ or $J_2$ and make a move there.
Finally, every game $J=J_1+J_2$ has a unique canonical form $\{X|Y\}$, which can be determined in polynomial time given all options $J_1^A$ and $J_2^A$ of Alice and all options $J_1^B$ and $J_2^B$ of Bob.

The negative $-J$ of a game $J$ is the game in which the roles of Alice and Bob are changed. Notice that $J-J=J+(-J)=0$, since, for every move of the first player in one game, the opponent has the same move in the other game, which is then the last to play, winning the game.

\subsection{Partizan game in paths}

Let us apply the Combinatorial Game Theory on the \textsc{Normal Partizan Domination Game} in paths.
We describe a path as its sequence of colors. For example, $AABA$ and $ABAA$ represents the same path $P_4$ $v_1v_2v_3v_4$ where $v_1$, $v_2$ and $v_4$ are colored A and $v_3$ is the only vertex colored B. Moreover, $BBAB=-AABA$, since the roles of Alice and Bob are changed. Notice that $AABA+BBAB=0$, that is, the first player loses in the disjoint union of two $P_4$'s with these colors.

As an example, let us determine the values of all paths with at most 4 vertices. 
For the instances of the path $P_1$, notice that $A=1$ and $B=-1$.
For $P_2$, $AA=1$, $BB=-1$ and $AB=BA=\{0|0\}=*$. 

For $P_3$, $AAA=2$ and, $BBB=-2$.
Note that $AAB=BAA=\{\dot{A}B,0|A\dot{A}\}=\{*,0|1\}=\frac{1}{2}$, where $\dot{A}B$ means that the dotted vertex does not need to be dominated, but can be selected to dominate other vertices.
Moreover, $ABB=BBA=-\frac{1}{2}$, $ABA=\{*|0\}=\downarrow$ and $BAA=\{0|*\}=\uparrow$.

Finally, for $P_4$, $AAAA=3$, $BBBB=-3$, $AAAB=BAAA=\{1|2\}=\frac{3}{2}$, $ABBB=BBBA=-\frac{3}{2}$, $AABA=ABAA=\left\{\frac{1}{2}|1\right\}=\frac{3}{4}$, $ABAA=BBAB=-\frac{3}{4}$ and $AABB=BBAA=ABAB=BABA=ABBA=BAAB=0$.

Let $v_1,\ldots,v_n$ be a path $P_n$ with vertices colored either $A$ or $B$ and let $1\leq i\leq j\leq n$.
Let $P_{i,j}$ represent the subpath from $v_i$ to $v_j$.
Let $X_{i,j}$ represent the subpath from $v_i$ to $v_j$ considering that $v_i$ does not need to be dominated.
Let $Y_{i,j}$ represent the subpath from $v_i$ to $v_j$ considering that $v_j$ does not need to be dominated.
Let $Z_{i,j}$ represent the subpath from $v_i$ to $v_j$ considering that $v_i$ and $v_j$ do not need to be dominated.
%Finally, let $p_{i,j}$, $x_{i,j}$, $y_{i,j}$ and $z_{i,j}$ be the values of the games $P_{i,j}$, $X_{i,j}$, $Y_{i,j}$ and $Z_{i,j}$, respectively.

Our objective is to determine the value of $P_{1,n}$, the entire instance.

\begin{lemma}\label{lem-partizan}
Let $n\geq 1$ and let $G$ be a graph $P_n$ with vertices $v_1,\ldots,v_n$ colored either $A$ or $B$. Then
$P_{i,j}=0$ if $i>j$, $X_{i,j}=Y_{i,j}=0$ if $i\geq j$ and $Z_{i,j}=0$ if $i\geq j-1$. Otherwise, 
$P_{i,j}=\{\mathcal{P}^A_{i,j}|\mathcal{P}^B_{i,j}\}$, $X_{i,j}=\{\mathcal{X}^A_{i,j}|\mathcal{X}^B_{i,j}\}$, $Y_{i,j}=\{\mathcal{Y}^A_{i,j}|\mathcal{Y}^B_{i,j}\}$ and $Z_{i,j}=\{\mathcal{Z}^A_{i,j}|\mathcal{Z}^B_{i,j}\}$, where, for $C\in\{A,B\}$:
\[
\mathcal{P}^C_{i,j}=\{Y_{i,k-1}+X_{k+1,j}:\ i\leq k\leq j\ \mbox{and}\ v_k\ \mbox{is colored}\ C\},
\]
\[
\mathcal{X}^C_{i,j}=\{Z_{i,k-1}+X_{k+1,j}:\ i\leq k\leq j\ \mbox{and}\ v_k\ \mbox{is colored}\ C\},
\]
\[
\mathcal{Y}^C_{i,j}=\{Y_{i,k-1}+Z_{k+1,j}:\ i\leq k\leq j\ \mbox{and}\ v_k\ \mbox{is colored}\ C\},
\]
\[
\mathcal{Z}^C_{i,j}=\{Z_{i,k-1}+Z_{k+1,j}:\ i\leq k\leq j\ \mbox{and}\ v_k\ \mbox{is colored}\ C\}.
\]
\end{lemma}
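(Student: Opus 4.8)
The plan is to prove the four recursions simultaneously by strong induction on the number $t\ge 0$ of vertices that still require domination in the position; this number equals $j-i+1$ for $P_{i,j}$, equals $j-i$ for $X_{i,j}$ and $Y_{i,j}$, and equals $j-i-1$ for $Z_{i,j}$, where a negative value of these expressions is to be read as $t=0$. When $t=0$ no vertex requires domination, so neither player has a legal move — every move must dominate some vertex that was not dominated before — and hence the position is the empty game $0$; since the expressions above are $\le 0$ exactly when $i>j$ (for $P$), when $i\ge j$ (for $X$ and $Y$), and when $i\ge j-1$ (for $Z$), this establishes the stated base cases.

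For the inductive step, fix a position with $t\ge 1$ and one of the players, say the one playing color $C\in\{A,B\}$. I would first verify that the legal moves of this player are precisely the vertices $v_k$ with $i\le k\le j$ that are colored $C$: such a $v_k$ dominates a vertex not dominated before — namely $v_k$ itself when $v_k$ requires domination, and otherwise (when $v_k$ is the freed endpoint $v_i$ or $v_j$ of an $X$-, $Y$- or $Z$-position) the neighbor $v_{i+1}$ or $v_{j-1}$, which requires domination precisely because $t\ge 1$ — whereas no vertex outside $\{v_i,\dots,v_j\}$ belongs to the position. The heart of the argument is the decomposition claim: after $v_k$ is selected, the resulting position is the disjoint sum of the segment on $v_i,\dots,v_{k-1}$ and the segment on $v_{k+1},\dots,v_j$. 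This holds because selecting $v_k$ dominates exactly $\{v_{k-1},v_k,v_{k+1}\}$ among the relevant vertices, and because $v_k$ is henceforth unplayable and separates the path, so that every subsequent move at a vertex $v_\ell$ with $\ell\le k-1$ (resp.\ $\ell\ge k+1$) affects only the left (resp.\ right) segment; the two segments thus evolve independently. Inspecting the domination status of the two new endpoints $v_{k-1}$ and $v_{k+1}$ — both just dominated — together with the inherited status of $v_i$ and $v_j$, one sees that the left segment is $Y_{i,k-1}$ when $v_i$ still requires domination (the cases $P$ and $X$) and is $Z_{i,k-1}$ otherwise (the cases $Y$ and $Z$), and symmetrically that the right segment is $X_{k+1,j}$ or $Z_{k+1,j}$; the degenerate instances $k=i$, $k=j$, and one-vertex segments all fall under the base cases already proved and contribute the game $0$, in agreement with the displayed formulas.

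Since the value of a disjoint sum is the sum of the values, the induction hypothesis applied to the strictly smaller segments shows that selecting $v_k$ leads to the game $Y_{i,k-1}+X_{k+1,j}$, $Z_{i,k-1}+X_{k+1,j}$, $Y_{i,k-1}+Z_{k+1,j}$, or $Z_{i,k-1}+Z_{k+1,j}$ depending on the position type. Ranging over all $C$-colored vertices $v_k$ with $i\le k\le j$ produces exactly the option sets $\mathcal{P}^C_{i,j}$, $\mathcal{X}^C_{i,j}$, $\mathcal{Y}^C_{i,j}$, $\mathcal{Z}^C_{i,j}$, and the defining identity $J=\{\mathcal{J}^A|\mathcal{J}^B\}$ then gives the four claimed formulas, completing the induction. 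I expect the only genuinely delicate point to be the decomposition claim together with the exhaustive check of the degenerate endpoint cases; the rest is bookkeeping about which of $v_i$, $v_j$, $v_{k-1}$, $v_{k+1}$ does or does not require domination, and one may sanity-check the formulas against the hand-computed small instances such as $AAB=\{*,0|1\}=\frac12$ and $AABA=\{\frac12|1\}=\frac34$.
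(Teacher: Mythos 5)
Your proof is correct and takes essentially the same route as the paper's: a move at a $C$-colored vertex $v_k$ splits the path into two independent segments whose new endpoints $v_{k-1}$ and $v_{k+1}$ are dominated but still selectable, yielding the sums $Y_{i,k-1}+X_{k+1,j}$, etc., with the degenerate short segments collapsing to $0$. Your write-up is somewhat more explicit than the paper's (the formal induction on the number of undominated vertices, the verification that every $C$-colored vertex in the range is actually playable, and the check that $v_k$ becomes unplayable so the segments truly decouple), but these are refinements of the same argument rather than a different one.
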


\begin{proof}
If $i>j$, then $P_{i,j}=X_{i,j}=Y_{i,j}=Z_{i,j}=0$, since there is no vertex in these instances.
If $i=j$, then $X_{i,j}=Y_{i,j}=Z_{i,j}=0$, since there is only one vertex in these instances and it is already dominated.
If $i=j-1$, then $Z_{i,j}=0$, since there is only two vertices in this instance and both are already dominated.

First consider the instance $P_{i,j}$ with $i\leq j$ and let $v_k$ be a vertex colored A in $P_{i,j}$, that is, $i\leq k\leq j$. Then the move of Alice in $v_k$ divides the subpath in two parts. The first part is from $v_i$ to $v_{k-1}$, where $v_{k-1}$ is already dominated (by $v_k$), but can be used to dominated other vertices. The second part is from $v_{k+1}$ to $v_j$, where $v_{k+1}$ is already dominated as before. Thus, the first part is represented by the game $Y_{i,k-1}$ and the second part by the game $X_{k+1,j}$, and both are independent, that is, the resulting game is a disjoint union of these games, which is represented by the sum $Y_{i,k-1}+X_{k+1,j}$. Analogous for Bob's moves.

Similarly, in the case of $X_{i,j}$, the parts are represented by the games $Z_{i,k-1}$ and $X_{k+1,j}$, since $v_i$, $v_{k-1}$ and $v_{k+1}$ are the only vertices already dominated.
In the case of $Y_{i,j}$, the parts are represented by the games $Y_{i,k-1}$ and $Z_{k+1,j}$, since $v_{k-1}$, $v_{k+1}$ and $v_j$ are the only vertices already dominated.
Finally, in the case of $Z_{i,j}$, the parts are represented by the games $Z_{i,k-1}$ and $Z_{k+1,j}$, since $v_i$, $v_{k-1}$, $v_{k+1}$ and $v_j$ are the only vertices already dominated.
\end{proof}

\begin{theorem}
The \textsc{Normal Partizan Domination Game} is polynomial time solvable in disjoint unions of paths and cycles.
\end{theorem}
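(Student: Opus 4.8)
The plan is to reduce the whole problem to computing, in polynomial time, the \emph{value} (canonical form) of the game position attached to the instance, and then to read off the winner from its outcome class, using the basic Combinatorial Game Theory facts recalled above: a position equals $0$ iff the first player loses; the sum of positions corresponds to the disjoint union of games; and the canonical form of $\{X\mid Y\}$, as well as of a sum of two positions, can each be computed in polynomial time from the options involved.

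For a single path $P_n$ with vertices $v_1,\dots,v_n$ colored $A$ or $B$, I would run a bottom-up dynamic program over the $O(n^2)$ intervals $[i,j]$, computing the canonical forms of the four positions $P_{i,j},X_{i,j},Y_{i,j},Z_{i,j}$ in order of increasing length $j-i$. The degenerate cases (those with $i>j$, $i=j$, or $i=j-1$ for $Z$) equal the position $0$ by Lemma~\ref{lem-partizan}. For the inductive step, Lemma~\ref{lem-partizan} writes each of the four positions as $\{\mathcal S^A\mid\mathcal S^B\}$, where every option is a sum of two previously computed (strictly shorter) subpositions; I would first compute the canonical form of each such sum --- an $O(n)$-size list, each entry obtained in polynomial time from two canonical forms --- and then the canonical form of $\{\mathcal S^A\mid\mathcal S^B\}$ from that list. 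The final cell $P_{1,n}$ is the value of the instance.

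Cycles reduce to this. In $C_n$ a first move on $v_k$ dominates exactly $v_{k-1},v_k,v_{k+1}$ and deletes $v_k$, leaving the path on the remaining $n-1$ vertices $v_{k+1},v_{k+2},\dots,v_{k-1}$ (indices mod $n$) with \emph{both} endpoints already dominated, i.e.\ exactly a $Z$-type position on that arc; hence $C_n=\{\,Z_k : v_k\text{ colored }A\ \mid\ Z_k : v_k\text{ colored }B\,\}$, where $Z_k$ is that arc position. To obtain all the $Z_k$ simultaneously I would apply the path dynamic program above to the doubled path $w_1\cdots w_{2n}$ defined by $w_i=w_{i+n}=v_i$, in which $Z_k$ is the subposition on the interval $[k+1,k+n-1]$; assembling the $n$ options and taking one more canonical form yields the value of $C_n$. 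Finally, a disjoint union of paths and cycles has value equal to the sum of the values of its components, which I would accumulate one component at a time, re-canonicalising the running sum at each step; the outcome class of the total then decides the winner for either choice of the starting player.

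The step I expect to be the main obstacle is \textbf{polynomiality}: a priori the canonical forms produced along the way could blow up in size. The key is that the dynamic program never expands the recursion of Lemma~\ref{lem-partizan} into an (exponential) game tree --- every call to the canonical-form routine receives inputs that are \emph{already} canonical forms, namely sums of two shorter subpositions, so the stored representations stay of polynomial size and the stated polynomial-time bounds for canonical forms of sums and of $\{X\mid Y\}$ apply throughout. (If desired, one can reinforce this by checking directly that the values arising for colored paths are numbers of bounded denominator plus a bounded infinitesimal part, as the small examples suggest, which gives an explicit size bound; but the uniqueness and polynomial-time computability of canonical forms already suffice.) Granting this, the remaining ingredients --- Lemma~\ref{lem-partizan}, summation of positions, and reading off an outcome class --- are routine.
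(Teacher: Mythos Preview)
Your proposal is essentially the paper's own proof: the same dynamic program over the $O(n^2)$ intervals using Lemma~\ref{lem-partizan} to build $P_{i,j},X_{i,j},Y_{i,j},Z_{i,j}$ in canonical form, the same reduction of a cycle to $Z$-type path positions after the first move, and the same summation of component values for a disjoint union. Your doubled-path device for cycles is a minor packaging difference, and you are somewhat more explicit than the paper about the polynomiality caveat (which the paper simply asserts), but the underlying argument is identical.
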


\begin{proof}
First let us solve for any path $P_n$. 
Algorithm \ref{algoritmo1} below is a dynamic programming algorithm that applies Lemma \ref{lem-partizan} in order to determine the value of $P_{1,n}$, which is the value of the entire instance. If $P_{1,n}>0$, Alice wins. If $P_{1,n}<0$, Bob wins. If $P_{1,n}=0$, the second player wins. If $P_{1,n}\ ||\ 0$, the first player wins.

The lines 24 to 27 of Algorithm \ref{algoritmo1} calls the polynomial algorithm to obtain the canonical form of a game, given its options.
Thus, the time of this algorithm is polynomial.

For the cycle $C_{n+1}$ with vertices $v_1,\ldots,v_{n+1}$ colored either $A$ or $B$, we can determine all options of Alice (resp. Bob) by selecting one vertex colored A (resp. B), renaming the vertices in such a way that the selected vertex is $v_{n+1}$ and then determining the value of $Z_{1,n}$ for the path $v_1,\ldots v_n$, that is, considering that $v_1$ and $v_n$ are already dominated. This can be done by the algorithm above.
After obtaining all options of Alice and Bob, we can determine the value of the instance on the cycle $C_{n+1}$.

%Regarding the disjoint union of paths and cycles, it is possible to determine the value of the graph by summing the values of each component, and we are done.
\end{proof}

\begin{algorithm}
\caption{Partizan-Normal-Dom-Paths}\label{algoritmo1}
\begin{algorithmic}[1]
\For{$i\leftarrow 1,\ldots,n+1$}
  \State $P_{i,i-1}\leftarrow 0$;\hspace{2cm}$Z_{i,i-1}\leftarrow Z_{i,i}\leftarrow Z_{i,i+1}\leftarrow 0$;
  \State $X_{i,i-1}\leftarrow X_{i,i}\leftarrow 0$;\ \ \ \ \ $Y_{i,i-1}\leftarrow Y_{i,i}\leftarrow 0$;
  \If{($i\leq n$) \textbf{and} ($v_i$ is colored $A$)}
    \State\ \ $P_{i,i}\leftarrow +1$
  \Else\ $P_{i,i}\leftarrow -1$
  \EndIf
  \If{($i\leq n-1$) \textbf{and} ($v_i$ and $v_{i+1}$ are colored $A$)}
    \State $P_{i,i+1}\leftarrow X_{i,i+1}\leftarrow Y_{i,i+1}\leftarrow 1$; 
  \Else\If{($i\leq n-1$) \textbf{and} ($v_i$ and $v_{i+1}$ are colored $B$)}
    \State\ \ $P_{i,i+1}\leftarrow X_{i,i+1}\leftarrow Y_{i,i+1}\leftarrow -1$;
  \Else\ $P_{i,i+1}\leftarrow X_{i,i+1}\leftarrow Y_{i,i+1}\leftarrow\ \ *$;
  \EndIf
  \EndIf
\EndFor
\For{$i\leftarrow n-2,\ldots,1$ (dec)}
  \For{$j\leftarrow i+2,\ldots,n$}
    \State $\mathcal{P}_{i,j}^A\ \leftarrow\ \mathcal{P}_{i,j}^B\ \leftarrow\ \emptyset$;\ \ \ \ \ \ $\mathcal{Z}_{i,j}^A\ \leftarrow\ \mathcal{Z}_{i,j}^B\ \leftarrow\ \emptyset$
    \State$\mathcal{X}_{i,j}^A\ \leftarrow\ \mathcal{X}_{i,j}^B\ \leftarrow\ \emptyset$;\ \ \ \ \ \ $\mathcal{Y}_{i,j}^A\ \leftarrow\ \mathcal{Y}_{i,j}^B\ \leftarrow\ \emptyset$
    \For{$k\leftarrow i,\ldots,j$}
      \If{$v_k$ is colored $A$}
        \State $\mathcal{P}^A_{i,j}\ \leftarrow\  \mathcal{P}^A_{i,j}\ \cup\ \{Y_{i,k-1}+X_{k+1,j}\}$
        \State $\mathcal{X}^A_{i,j}\ \leftarrow\  \mathcal{X}^A_{i,j}\ \cup\ \{Z_{i,k-1}+X_{k+1,j}\}$
        \State $\mathcal{Y}^A_{i,j}\ \leftarrow\  \mathcal{Y}^A_{i,j}\ \cup\ \{Y_{i,k-1}+Z_{k+1,j}\}$
        \State $\mathcal{Z}^A_{i,j}\ \leftarrow\  \mathcal{Z}^A_{i,j}\ \cup\ \{Z_{i,k-1}+Z_{k+1,j}\}$
      \EndIf
      \If{$v_k$ is colored $B$}
        \State $\mathcal{P}^B_{i,j}\ \leftarrow\  \mathcal{P}^B_{i,j}\ \cup\ \{Y_{i,k-1}+X_{k+1,j}\}$
        \State $\mathcal{X}^B_{i,j}\ \leftarrow\  \mathcal{X}^B_{i,j}\ \cup\ \{Z_{i,k-1}+X_{k+1,j}\}$
        \State $\mathcal{Y}^B_{i,j}\ \leftarrow\  \mathcal{Y}^B_{i,j}\ \cup\ \{Y_{i,k-1}+Z_{k+1,j}\}$
        \State $\mathcal{Z}^B_{i,j}\ \leftarrow\  \mathcal{Z}^B_{i,j}\ \cup\ \{Z_{i,k-1}+Z_{k+1,j}\}$
      \EndIf
    \EndFor
    \State $P_{i,j}\ \leftarrow\  \{\mathcal{P}^A_{i,j}\ |\ \mathcal{P}^B_{i,j}\}$;\ \ \ \ \ \ $Z_{i,j}\ \leftarrow\  \{\mathcal{Z}^A_{i,j}\ |\ \mathcal{Z}^B_{i,j}\}$
    \State $X_{i,j}\ \leftarrow\  \{\mathcal{X}^A_{i,j}\ |\ \mathcal{X}^B_{i,j}\}$;\ \ \ \ \ \ $Y_{i,j}\ \leftarrow\  \{\mathcal{Y}^A_{i,j}\ |\ \mathcal{Y}^B_{i,j}\}$
  \EndFor
\EndFor
\State \textbf{return} $P_{1,n}$
\end{algorithmic}
\end{algorithm}

\vspace{0.5cm}

\section{Conclusions and Open Problems}

In this paper, we investigated the normal variant of the \textsc{Domination Game}, which we call \textsc{Normal Domination Game}. 
The optimization variant of this game and the game domination number $\gamma_g(G)$ have many results in the literature. However, this is the first paper investigating the normal variant.

We prove the PSPACE-hardness of the \textsc{Normal Domination Game} and solve this game for paths and cycles.
We also investigated the normal variant of natural partizan version of this game, which we call \textsc{Normal Partizan Domination Game}.
We also prove the following:

\begin{theorem}
Let $G$ be a graph whose connected components are paths or cycles.
Then the \textsc{Normal Domination Game} on $G$ can be solved in logarithmic time.
Moreover, the \textsc{Normal Partizan Domination Game} on $G$ can be solved in polynomial time.
\end{theorem}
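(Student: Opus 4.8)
\emph{The impartial game.} The plan is to read the winner off the closed forms of Theorems~\ref{teo-nimb} and \ref{teo-nimb2} by additivity of nimbers. Each path component $P_n$ has nimber $1,1,2$ for $n=1,2,3$ and $0,1,1,3$ according as $n\bmod 4$ is $0,1,2,3$ when $n\ge 4$, and each cycle component $C_n$ has nimber $1$ when $n\equiv 3\pmod 4$ and $0$ otherwise. By the Sprague--Grundy theory (Section~\ref{sec:nimbers}) the nimber of $G$ is the bitwise xor of the component nimbers, so Bob wins exactly when this xor is $0$. Since every component nimber lies in $\{0,1,2,3\}$, the xor is governed by just two parities --- that of the number of components with nimber $1$ or $3$ (the low bit) and that of the number of components with nimber $2$ or $3$ (the high bit) --- and Bob wins iff both are even. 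As only these two bits of aggregate information about $G$ matter, the winner is obtained in logarithmic time under the natural succinct description of $G$ by its components.

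\emph{The partizan game.} Sprague--Grundy theory is unavailable here, so the plan is to use Conway's theory. First I would compute the game value of every component: for a path component $P_n$ this is the output $P_{1,n}$ of Algorithm~\ref{algoritmo1}, correct by Lemma~\ref{lem-partizan} and produced in polynomial time; for a cycle component $C_{n+1}$ it is the canonical form of $\{\,\mathcal{A}\mid\mathcal{B}\,\}$, where $\mathcal{A}$ (resp.\ $\mathcal{B}$) collects the values $Z_{1,n}$ of the paths obtained by deleting one $A$-coloured (resp.\ $B$-coloured) vertex of the cycle --- exactly the construction already used to solve a single path or cycle in polynomial time. Since a disjoint union of positions corresponds to the sum of their game values, the value of $G$ is $v_1+\dots+v_t$, the sum of the component values, which I would build iteratively, keeping the running total in canonical form and using the fact --- recalled in our exposition of Conway's theory above --- that the canonical form of the sum of two games is computable in polynomial time from their options. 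Finally, Alice wins iff the value of $G$ is positive or incomparable with $0$, which is read off its canonical form.

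\emph{The main obstacle.} The delicate step is keeping all canonical forms polynomial-size throughout the iterated summation: a priori, adding $t=\Theta(|V(G)|)$ games could blow up the representation, since the canonical form of a sum need not be bounded in terms of the summands. I would control this by showing that the values arising for path and cycle components of the domination game always stay in a \emph{tame} family --- dyadic numbers together with small infinitesimals such as $*$ and integer multiples of $\uparrow$ --- which is closed under addition, in which every value reachable from an $N$-vertex graph has a canonical form of size polynomial in $N$, and for which a $t$-fold sum is computed directly (add the number parts, add the $\uparrow$-coefficients, take the parity of the $*$-parts). All the small-path values computed earlier ($0,\pm1,\pm2,\pm3,\pm\tfrac12,\pm\tfrac32,\pm\tfrac34,*,\uparrow,\downarrow$) lie in such a family, and I would prove the general claim by strong induction on the number of undominated vertices, using the recursion of Lemma~\ref{lem-partizan} and checking that forming a game $\{\text{tame options}\mid\text{tame options}\}$ along it stays tame. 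One may additionally group identical components first --- since the component sizes are positive integers summing to $N$, at most $O(\sqrt{N})$ distinct (type, size) pairs occur --- so that only $O(\sqrt{N})$ distinct values need be formed before the final sum. Establishing this tameness (or, alternatively, simply invoking the already-established polynomial-time solvability of the partizan game on disjoint unions of paths and cycles) is the step I expect to be the crux.
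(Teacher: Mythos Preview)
Your approach coincides with the paper's on both parts. For the impartial game the paper does exactly what you do: read off each component's nimber from Theorems~\ref{teo-nimb} and~\ref{teo-nimb2}, XOR them, and declare Bob the winner iff the result is $0$; the ``logarithmic'' refers, as you surmise, to the bit-length of the component sizes. For the partizan game the paper's entire argument is a single sentence --- ``Similarly for the \textsc{Normal Partizan Domination Game}, using Lemma~\ref{lem-partizan}, obtaining a polynomial time'' --- which amounts to running Algorithm~\ref{algoritmo1} (and its cycle variant) on each component and summing the resulting values.

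The ``main obstacle'' you flag --- possible blow-up of canonical forms under iterated summation --- is simply not addressed in the paper. Earlier the paper asserts, without proof, that the canonical form of a sum is computable in polynomial time from the options, and the present theorem just leans on that assertion. Your tameness sketch (dyadic numbers plus $*$ and multiples of $\uparrow$, closed under addition) therefore goes strictly beyond what the paper supplies; you are not missing an idea that the paper has, you are identifying a gap that the paper leaves open. If your goal is to match the paper, you may stop where it does; if your goal is a fully rigorous polynomial bound, the tameness claim (or some substitute) would indeed need to be proved.
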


\begin{proof}
In the \textsc{Normal Domination Game}, it is possible to determine from Theorems \ref{teo-nimb} and \ref{teo-nimb2} the nimber of any path $P_n$ and any cycle $C_n$, based only on the number $n$ (size of $\lceil\log_2 n\rceil$ bits). Therefore, it is possible to determine the nimber of $G$ from the Sprague-Grundy theory by applying the bitwise-xor operation on the nimbers of the components of $G$, in logarithmic time.
If the nimber is $0$, the second player (Bob) wins. Otherwise, Alice wins.

Similarly for the \textsc{Normal Partizan Domination Game}, using Lemma \ref{lem-partizan}, obtaining a polynomial time.
\end{proof}

It would be interesting to determine a large class of graphs in which the game can be solved in polynomial time.
We were unfortunately unable to fully characterize the class of trees. For instance, 

\begin{question}
The \textsc{Normal Domination Game} is polynomial time solvable in trees?
\end{question}

Moreover, the Sprague-Grundy theory is a bit more complicated for misère games.

\begin{question}
The \textsc{Misère Domination Game} is polynomial time solvable in paths and cycles? And in trees?
\end{question}

Moreover, the Sprague-Grundy theory is a bit more complicated for misère games.

\begin{question}
The \textsc{Misère Partizan Domination Game} is polynomial time solvable in paths and cycles?
\end{question}

\bibliographystyle{plain}
%\bibliography{refs}

\end{document}